\documentclass[12pt]{tran-l}
\usepackage{graphicx}
\usepackage{amsmath, amssymb, amsthm, mathrsfs,verbatim, amscd}
\usepackage[curve]{xypic}
\usepackage{setspace}
\usepackage[left=1in,top=1in,right=1in]{geometry}
\newtheorem{lemma}{Lemma}
\newtheorem{thm}{Theorem}
\newtheorem{prop}{Proposition}
\newtheorem{Def}{Definition}
\newtheorem{cor}{Corollary}

\newtheorem{proposition}{Proposition}

\newtheorem{condition}{Condition}
\newcommand{\LL}{\ensuremath{\left|\left|}}
\newcommand{\RR}{\ensuremath{\right|\right|}}
\newcommand{\R}{\ensuremath{\mathbb{R}} }
\newcommand{\C}{\ensuremath{\mathbb{C}} }
\newcommand{\N}{\ensuremath{\mathbb{N}} }

\newcommand{\Z}{\ensuremath{\mathbb{Z}} }
\newcommand{\D}{\ensuremath{\mathscr{D}} }

\newcommand{\T}{\mathcal{T}}

\renewcommand{\S}{\mathcal{S}}

\newcommand{\cf}{\mathbf{1}}
\renewcommand{\P}{\mathbb{P}}
\newcommand{\E}{\mathbb{E}}

\renewcommand{\deg}{\textrm{deg}}

\newcommand{\shape}{\mathrm{shape}}
\newcommand{\labels}{\#\mathrm{labels}}
\newcommand{\ordered}{\#\mathrm{ordered}}

\newcommand{\deq}{\mathrel{\mathop :}=}
\numberwithin{equation}{section}
\newcommand{\var}{\textrm{var}}
\newcommand{\Ht}{\textrm{ht}}
\newcommand{\M}{\mathcal{M}_w}

\renewcommand{\cR}{\mathcal{R}}
\newcommand{\fl}[1]{\lfloor #1 \rfloor}
\newcommand{\cP}{\mathcal{P}}

\begin{document}

\title{Schr\"oder's problems and scaling limits of random trees} 

\author{Jim Pitman}
\address{Department of Statistics \\ University of California\\ Berkeley, CA 94720}
\email{pitman@stat.berkeley.edu}
\thanks{JP supported in part by NSF Grant No. 0806118}

\author{Douglas Rizzolo}
\address{Department of Mathematics \\ University of Washington\\ Seattle, WA 98105}
\email{drizzolo@math.washington.edu}
\thanks{DR supported in part by NSF Grant No. 0806118, in part by the National Science Foundation Graduate  Research Fellowship under Grant No. DGE 1106400, and in part by NSF DMS-1204840.}

\subjclass[2010]{Primary 60C05, 60J80}

\begin{abstract}
In his now classic paper \cite{Schr70}, Schr\"oder posed four combinatorial problems about the number of certain types of bracketings of words and sets.  Here we address what these bracketings look like on average.  For each of the four problems we prove that a uniform pick from the appropriate set of bracketings, when considered as a tree, has the Brownian continuum random tree as its scaling limit as the size of the word or set goes to infinity.
\end{abstract}

\maketitle

\section{Introduction} 
In his now classic paper \cite{Schr70}, Schr\"oder posed four combinatorial problems about bracketings of words and sets: how many binary bracketings are there of a word of length $n$? how many bracketings are there of a word of length $n$? how many binary bracketings are there of a set of size $n$? and how many bracketings are there of a set of size $n$?  These questions are well studied and \cite{Stan99} gives a good account of the solutions.  In this paper we are concerned with a probabilistic variation on these questions: for each of the above questions, if you select a bracketing uniformly at random what does it look like?  To answer these questions, we will use the well known correspondence between the bracketings described above and various types of trees.  We will then apply Aldous's theory of continuum trees, originally developed in the series \cite{Aldo91a, Aldo91b, Aldo93} and subsequently studied by many authors, to study the scaling limits of these trees.  Let us briefly describe the correspondence between bracketings and trees.

\textbf{The first problem:} The correspondence is best illustrated by example. For $n=4$ the binary word bracketings are 
\[ (xx)(xx) \quad x(x(xx)) \quad ((xx)x)x \quad x((xx)x) \quad (x(xx))x.\]
A binary bracketing of a word with $n$ letters corresponds to rooted ordered binary tree with $n$ leaves in a natural way.  This is most easily described if we put brackets around the entire word and each letter, which are left out of our example because they are visually cumbersome.  The tree corresponding to a bracketing is constructed recursively.  A single bracketed letter is a leaf.  For a word with more than one letter, the bracketing of the whole word is the root.  Attached as subtrees to the root are, in order of appearance, the trees corresponding to the maximal proper bracketed subwords.  For $n=4$, this is illustrated by Figure \ref{fig lab bin tree}.

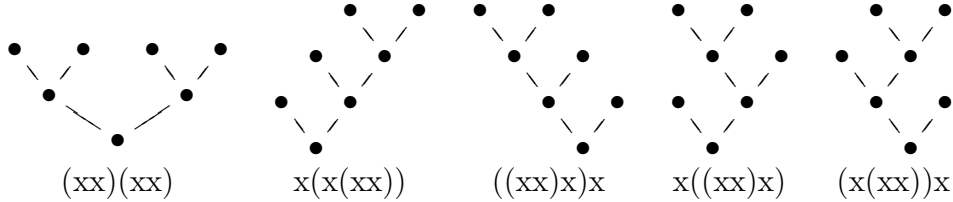
\begin{figure}[here] 
\begin{center}\begin{tabular}{ccccc} $
\xymatrix@R=5pt@C=0pt{ \\ \bullet \ar@{-}[dr]& & \bullet\ar@{-}[dl] & & \bullet\ar@{-}[dr] & &\bullet\ar@{-}[dl] \\
& \bullet \ar@{-}[drr] & & & & \bullet \ar@{-}[dll] \\
& & & \bullet 
}$ 
&
$
\xymatrix@R=5pt@C=0pt{  
& & \bullet \ar@{-}[dr] & & \bullet\ar@{-}[dl]\\
& \bullet \ar@{-}[dr] & & \bullet\ar@{-}[dl]\\
\bullet\ar@{-}[dr] & & \bullet \ar@{-}[dl]\\
& \bullet
}$
&
$
\xymatrix@R=5pt@C=0pt{  
\bullet \ar@{-}[dr] & & \bullet\ar@{-}[dl]\\
  & \bullet \ar@{-}[dr] & & \bullet\ar@{-}[dl]\\
 & & \bullet\ar@{-}[dr] & & \bullet \ar@{-}[dl]\\
& & & \bullet
}$
&
$
\xymatrix@R=5pt@C=0pt{  
 \bullet \ar@{-}[dr] & & \bullet\ar@{-}[dl]\\
& \bullet \ar@{-}[dr] & & \bullet\ar@{-}[dl]\\
\bullet\ar@{-}[dr] & & \bullet \ar@{-}[dl]\\
& \bullet
}$
&
$
\xymatrix@R=5pt@C=0pt{  
& \bullet \ar@{-}[dr] & & \bullet\ar@{-}[dl]\\
 \bullet \ar@{-}[dr] & & \bullet\ar@{-}[dl]\\
& \bullet\ar@{-}[dr] & & \bullet \ar@{-}[dl]\\
& & \bullet
}$ \\
(xx)(xx) &  x(x(xx)) &  ((xx)x)x & x((xx)x) & (x(xx))x
\end{tabular}
\end{center}
\caption{Binary word bracketings and rooted ordered binary trees for $n=4$}
\label{fig lab bin tree}
\end{figure}
It is worth noting that these trees are in bijection with rooted ordered trees with $n$ vertices, but this correspondence is not as natural as the one above.

\textbf{The second problem:} General word bracketings are defined similarly to binary word bracketings and correspond to rooted ordered trees with $n$ leaves and no vertices with out degree equal to one.  We remark that these trees were recently studied in \cite{CuKo12} due to their connection with non-crossing plane configurations.

\textbf{The third problem:} The trees associated to binary set bracketings are constructed similarly to those associated to binary word bracketings.  They are rooted, unordered, leaf-labeled binary trees.  Figure \ref{fig bin tree} shows a sample of the correspondence for $n=4$ (for $n=4$ there are $15$ bracketings, so showing the whole correspondence is unwieldy).
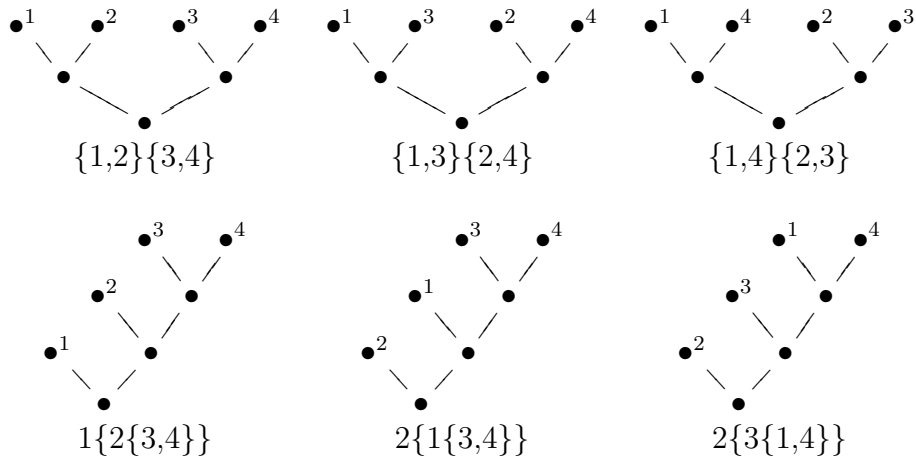
\begin{figure}[here]
\begin{center}\begin{tabular}{ccc} $
\xymatrix@R=5pt@C=0pt{ \\ \bullet^1 \ar@{-}[dr]& & \bullet^2\ar@{-}[dl] & & \bullet^3\ar@{-}[dr] & &\bullet^4\ar@{-}[dl] \\
& \bullet \ar@{-}[drr] & & & & \bullet \ar@{-}[dll] \\
& & & \bullet 
}$ 
&
$
\xymatrix@R=5pt@C=0pt{ \\ \bullet^1 \ar@{-}[dr]& & \bullet^3\ar@{-}[dl] & & \bullet^2\ar@{-}[dr] & &\bullet^4\ar@{-}[dl] \\
& \bullet \ar@{-}[drr] & & & & \bullet \ar@{-}[dll] \\
& & & \bullet 
}$ 
&
$
\xymatrix@R=5pt@C=0pt{ \\ \bullet^1 \ar@{-}[dr]& & \bullet^4\ar@{-}[dl] & & \bullet^2\ar@{-}[dr] & &\bullet^3\ar@{-}[dl] \\
& \bullet \ar@{-}[drr] & & & & \bullet \ar@{-}[dll] \\
& & & \bullet 
}$ 
 \\
\{1,2\}\{3,4\} &  \{1,3\}\{2,4\}  &  \{1,4\}\{2,3\}   \\
$
\xymatrix@R=5pt@C=0pt{ \\  
& & \bullet^3 \ar@{-}[dr] & & \bullet^4\ar@{-}[dl]\\
& \bullet^2 \ar@{-}[dr] & & \bullet\ar@{-}[dl]\\
\bullet^1\ar@{-}[dr] & & \bullet \ar@{-}[dl]\\
& \bullet
}$
&
$
\xymatrix@R=5pt@C=0pt{ \\  
& & \bullet^3 \ar@{-}[dr] & & \bullet^4\ar@{-}[dl]\\
& \bullet^1 \ar@{-}[dr] & & \bullet\ar@{-}[dl]\\
\bullet^2\ar@{-}[dr] & & \bullet \ar@{-}[dl]\\
& \bullet
}$
&
$
\xymatrix@R=5pt@C=0pt{ \\  
& & \bullet^1 \ar@{-}[dr] & & \bullet^4\ar@{-}[dl]\\
& \bullet^3 \ar@{-}[dr] & & \bullet\ar@{-}[dl]\\
\bullet^2\ar@{-}[dr] & & \bullet \ar@{-}[dl]\\
& \bullet
}$
\\
1\{2\{3,4\}\} & 2\{1\{3,4\}\} & 2\{3\{1,4\}\}
\end{tabular}
\end{center}
\caption{Binary set bracketings and rooted unordered leaf-labeled binary trees for $n=4$}
 \label{fig bin tree}
\end{figure}

\textbf{The fourth problem:} General set bracketings are defined similarly to binary set bracketings and correspond to rooted unordered leaf-labeled trees with $n$ leaves and no vertices with out degree equal to one.  In the literature, these trees are also called fragmentation trees \cite{HMPW08} and hierarchies \cite{FlSe09}.  The correspondence for $n=3$ is in Figure \ref{fig tree}.

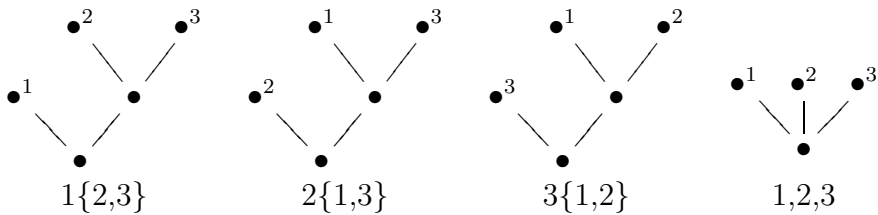
\begin{figure}[here]
\begin{center}\begin{tabular}{cccc} 
$
\xymatrix@R=10pt@C=5pt{ 
& \bullet^2 \ar@{-}[dr] & & \bullet^3\ar@{-}[dl]\\
\bullet^1\ar@{-}[dr] & & \bullet \ar@{-}[dl]\\
& \bullet
}$
&
$
\xymatrix@R=10pt@C=5pt{
& \bullet^1 \ar@{-}[dr] & & \bullet^3\ar@{-}[dl]\\
\bullet^2\ar@{-}[dr] & & \bullet \ar@{-}[dl]\\
& \bullet
}$
&
$
\xymatrix@R=10pt@C=5pt{   
& \bullet^1 \ar@{-}[dr] & & \bullet^2\ar@{-}[dl]\\
\bullet^3\ar@{-}[dr] & & \bullet \ar@{-}[dl]\\
& \bullet
}$
&
$
\xymatrix@R=10pt@C=5pt{ \\ 
 \bullet^1 \ar@{-}[dr] &\bullet^2 \ar@{-}[d] & \bullet^3\ar@{-}[dl]\\
& \bullet
}$
\\
1\{2,3\} & 2\{1,3\} & 3\{1,2\} & 1,2,3
\end{tabular}
\end{center}
\caption{Set bracketings and rooted unordered leaf-labeled trees for $n=3$}
 \label{fig tree}
\end{figure} 

Scaling limits of uniform picks from the trees appearing in the first and third problems are well studied.  A uniform pick from rooted ordered binary tree's with $n$ leaves has the same distribution as a Galton-Watson tree with offspring distribution $\xi_0=\xi_2=1/2$ conditioned to have $2n-1$ vertices.  Thus it falls within the scope of the results in \cite{Aldo93}.  Similarly, a uniform pick from rooted unordered leaf-labeled binary trees with $n$ leaves is a uniform binary fragmentation tree with $n$ leaves, and scaling limits of these are studied in \cite{HMPW08}.   In this paper we present a unified approach that is able to handle all four of these types of trees simultaneously.  Our method is essentially to link the trees appearing in these four problems to Galton-Watson trees conditioned on their number of leaves.  In particular, we obtain the following result, which is proved in Section \ref{subsec schroder}.

\begin{thm}\label{thm gw schroder}
Define four probability measures $\xi^{(1)},\dots,\xi^{(4)}$ on $\{0,1,\dots\}$ by $\xi^{(1)}_0=\xi^{(1)}_2=1/2$,
\[\xi^{(2)}_0 =2-\sqrt{2}\approx 0.5858,\quad \xi^{(2)}_1=0,\quad \mbox{and}\quad\xi^{(2)}_i = \left(\frac{2-\sqrt{2}}{2}\right)^{i-1}\approx (0.2929)^{i-1} \,\, \mbox{for }\, i\geq 2,\]
$\xi^{(3)}=\xi^{(1)}$, and 
\[\xi^{(4)}_0 = \frac{2\log(2)-1}{\log(2)},\quad \xi^{(4)}_1=0,\quad \textrm{and } \quad \xi^{(4)}_i = \frac{(\log (2))^{i-1}}{i!} \textrm{ for } i\geq 2.\] 
For each $i$ and $n$, let $T^i_n$ be distributed like a Galton-Watson tree with offspring distribution $\xi^{(i)}$ conditioned to have $n$ leaves.  
\begin{enumerate}
\item $T^1_n$ and $T^2_n$ are distributed like uniform random picks from the trees in Schr\"oder's first and second problem respectively. 
\item Let $U_n$ be a uniform random ordering of $\{1,\dots,  n\}$, independent of the trees and for $i\in \{3,4\}$ let $\tilde T^i_n$be constructed from $T^i_n$ by labeling the leaves of $T^i_n$ from left to right by $U$ and then forgetting the order structure.   Then $\tilde T^3_n$ and $\tilde T^4_n$ are distributed like uniform random picks from the trees in Schr\"oder's third and fourth problem respectively.
\end{enumerate}
\end{thm}

Scaling limits for these Galton-Watson trees were recently proven in \cite{Rizz11}, with an alternate approach given independently in \cite{Kort12}, and as a result we obtain the following theorem, the notation for which will be fully explained later.  

\begin{thm}\label{thm schroder limits}
For $i=1,2,3,4$, let $T^i_n$ be a uniform random tree of the type appearing in Schr\"oder's $i$'th problem with $n$ leaves.  For each $i$ and $n$ equip $T^i_n$ with the graph metric where edges have length one and the uniform probability measure on its leaves.  We then have the following limits with respect to the rooted Gromov-Hausdorff-Prokhorov topology:

\[\begin{array}{ccccc}
(i) & \displaystyle \frac{1}{\sqrt{n}} T^1_n \overset{d}{\to} 2\sqrt{2}T^{Br} & & (ii) &\displaystyle \frac{1}{\sqrt{n}} T^2_n \overset{d}{\to} \frac{\sqrt{2}}{2\sqrt{\sqrt{2}-1}}T^{Br} \\
& & \\
(iii) & \displaystyle \frac{1}{\sqrt{n}} T^3_n \overset{d}{\to} 2\sqrt{2}T^{Br} & & (iv) &\displaystyle \frac{1}{\sqrt{n}} T^4_n \overset{d}{\to} \frac{2}{\sqrt{4\log(2)-2}}T^{Br},
\end{array}\]
where $T^{Br}$ is the Brownian continuum random tree.
\end{thm}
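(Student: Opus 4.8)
The plan is to reduce each of the four cases to a statement about Galton--Watson trees conditioned on their number of leaves, and then invoke the scaling limit results of \cite{Rizz11} together with the Gibbs fragmentation tree analysis of \cite{McPW08}. For the two ``word'' problems ($i=1,2$) the trees are ordered, and the classical correspondence between uniform ordered trees with prescribed leaf count and conditioned Galton--Watson trees applies directly: for $i=1$ one takes the offspring distribution $\xi_0=\xi_2=1/2$ (as already noted in the introduction), and for $i=2$ one must choose an offspring law supported on $\{0,2,3,4,\dots\}$ whose generating function matches the enumeration of Schr\"oder bracketings. The key computation here is to identify, for the critical offspring law, the variance $\sigma^2$ and the leaf-probability $p_0$, since the scaling constant in the Brownian CRT limit is (up to normalization) $\sigma/(2 p_0)$ or a similar explicit function of these two quantities; the numbers $2\sqrt 2$ and $\frac{\sqrt 2}{2\sqrt{\sqrt 2-1}}$ should fall out of this.

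\smallskip

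For the two ``set'' problems ($i=3,4$) the trees are unordered and leaf-labeled, so one cannot appeal to the ordered correspondence directly. Here the strategy is to pass through Gibbs fragmentation trees: a uniform pick from leaf-labeled (binary or general) hierarchies on $n$ leaves is a Gibbs fragmentation tree with a specific splitting rule, and by the results of \cite{McPW08} (respectively \cite{HMPW08} for the binary case) such a tree, after forgetting labels and choosing a uniform ordering of the children at each branch point, has the same law as a conditioned Galton--Watson tree. Concretely, I would compute the weight sequence $(w_j)$ governing the Gibbs rule from the recursive enumeration of set bracketings, then exhibit the critical offspring distribution $\xi$ with $\xi_j \propto w_j/j!$ (the $j!$ accounting for the passage from unordered labeled to ordered) whose partition function is finite and critical, and again read off $\sigma^2$ and $p_0$. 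The $\log 2$ appearing in case $(iv)$ is the tell-tale sign of an offspring generating function of the form related to $2^{-j}$ weights, i.e.\ $\sum_j w_j s^j/j! $ involving $e^{s}$-type series, which is exactly what one expects from counting hierarchies.

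\smallskip

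The unifying step, and the technical heart of the argument, is to verify in each case that the relevant offspring distribution has finite variance (so that \cite{Rizz11} applies to give convergence to the Brownian CRT under $n^{-1/2}$ scaling with respect to the rooted Gromov--Hausdorff--Prokhorov topology) \emph{and} that the measure we put on $T^i_n$ — namely the uniform measure on its leaves — corresponds under the bijection to the measure for which the scaling limit theorem is stated (the limiting mass measure on $T^{Br}$). Since \cite{Rizz11} gives convergence of Galton--Watson trees conditioned on leaves with the uniform measure on leaves, this should match up cleanly, but one must be careful that the bijections (ordered tree $\leftrightarrow$ bracketing, and unordered labeled tree $\leftrightarrow$ ordered tree) are measure-preserving on leaves and distort the graph metric only by bounded multiplicative factors that wash out in the limit.

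\smallskip

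I expect the main obstacle to be the set cases $(iii)$ and $(iv)$: one has to correctly set up the dictionary between Gibbs fragmentation weights and offspring laws, track the combinatorial factors ($j!$ for orderings, multinomial coefficients for distributing labels among subtrees), and confirm that the resulting offspring law is genuinely critical with finite variance — in particular that the exponential-type generating function for hierarchies yields a probability law with a finite second moment. The binary cases $(i)$ and $(iii)$ are essentially already in the literature (\cite{Aldo93}, \cite{HMPW08}) and serve mainly as consistency checks on the normalization constants; the real work is organizing $(ii)$ and $(iv)$ so that a single lemma about conditioned Galton--Watson trees, quoted from \cite{Rizz11}, delivers all four limits with the stated explicit constants.
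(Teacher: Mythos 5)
Your high-level plan — identify a conditioned Galton--Watson model for each of the four tree families and then invoke the leaf-conditioned scaling limit of \cite{Rizz11} — is exactly the structure of the paper's proof, and for the ordered cases $(i)$ and $(ii)$ your description (tilting to a critical offspring law supported on $\{0,2,3,\dots\}$, reading off $\sigma^2$ and $\xi_0$) matches what the paper does via Proposition~\ref{prop tilting} and Corollary~\ref{cor uni ord tree}. The scaling constant is $2/(\sigma\sqrt{\xi_0})$, not ``$\sigma/(2p_0)$ or similar''; with the specific critical laws one indeed gets $2\sqrt2$, $\tfrac{\sqrt2}{2\sqrt{\sqrt2-1}}$, $2\sqrt2$, and $\tfrac{2}{\sqrt{4\log 2-2}}$.

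Where you depart from the paper is the set cases $(iii)$ and $(iv)$. You propose to route through Gibbs fragmentation trees and \cite{McPW08}, \cite{HMPW08}, which would work for $(iii)$ (that is literally what \cite{HMPW08} proved) but requires you to supply the equivalence between uniform general hierarchies and a conditioned Galton--Watson model — that equivalence is not in \cite{McPW08} in the form you need, and it is in fact the novel ingredient of the paper. The paper avoids the Gibbs detour entirely: Proposition~\ref{prop leaf-labeling} shows that taking a $Q^{\zeta(o)}_n$-distributed ordered tree and uniformly labeling its leaves while forgetting the order gives a $Q^{\hat\zeta}_n$-distributed leaf-labeled tree with $\hat\zeta_n = n!\,\zeta_n$, and Corollary~\ref{cor leaf-labeled gw} then packages this with tilting into a single statement covering all four Schr\"oder problems at once. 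This is cleaner than your proposal because the operation ``label the leaves of an ordered GW tree uniformly at random and forget the order'' is manifestly an isometry of pointed metric measure spaces (it changes neither the graph metric nor which vertices are leaves), so convergence in the GHP topology transfers with no argument; your worry about ``bounded multiplicative distortion of the graph metric'' is thus a red herring — no distortion occurs. The Gibbs-tree perspective does appear in the paper, but as a separate structural remark, not as a step in the proof of the main theorem.
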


As noted above, parts (i) and (iii) were originally proven in \cite{Aldo93} and \cite{HMPW08} respectively.  Parts (ii) and (iv) appear to be new, though an alternate approach to (ii) was independently obtained in \cite{Kort12}.  Though, as mentioned, this theorem follows from Theorem \ref{thm gw schroder} and \cite[Theorem 1]{Rizz11}, we include an independent proof in Section \ref{sec scaling limits}.  The proof we give here exploits the fact the distributions $\xi^i$ in Theorem \ref{thm gw schroder} have some exponential moments and is considerably simpler than the proof of \cite[Theorem 1]{Rizz11}.  Furthermore, our approach lets us obtain asymptotic results for other quantities associated to these trees as indicated in Section \ref{sec scaling limits}. 

The paper is organized as follows.  In Section \ref{sec basic models} we rigorously introduce the models of random trees under consideration here.  In Section \ref{sec scaling limits} we introduce the analytic setting for Theorem \ref{thm schroder limits} and end with the proof of this theorem.  This section also includes a detailed analysis of the depth-first processes associated with these trees.  Finally, in Section \ref{sec direct computations} we use elementary methods from analytic combinatorics to compute some asymptotic properties of these trees explicitly.

\section{Combinatorial models and Galton-Watson trees}\label{sec basic models}
In this section we develop several combinatorial and probabilistic models of trees.  There are two primary types of trees we will be dealing with in the sequel: rooted ordered unlabeled trees and rooted unordered leaf-labeled trees.  Combinatorial relations between rooted ordered unlabeled trees and rooted unordered labeled trees are well known when the size of a tree is its number of vertices (se e.g. \cite{Pitm97, Aldo91b, FlSe09, Drmo09}).  In this section we develop analogous relations when the size of a tree is its number of leaves.  Particularly important for us is Corollary \ref{cor leaf-labeled gw}, which relates Schr\"oder's problems to particular Galton-Watson trees conditioned on their number of leaves.

We briefly give an account of the formal constructions of the trees we will be considering.  Fix a countably infinite set $S$; we will consider the vertex sets of all graphs discussed to be subsets of $S$.  Let $\T^{(\ell)}_n$ denote the set of rooted unordered trees with $n$ leaves (where the root is considered a leaf if and only if it is the only vertex in the the tree) whose leaves are labeled by $\{1,2,\dots,n\}$.  More precisely, we consider the set $\T^S_n$ of all trees whose vertex sets are contained in $S$ that have a distinguished root and $n$ leaves (where the root is considered a leaf if and only if it is the only vertex in the the tree), whose leaves are labeled by $\{1,2,\dots,n\}$ and set $\T^{(\ell)}_n = \T^S_n/\sim$ where $t\sim s$ if there is a root and label preserving isomorphism from $t$ to $s$.  This is the only time we shall go through this formal construction, but all other sets of trees we discuss should be considered as formally constructed in an analogous fashion.  We also let $\T^{(\ell)} = \cup_{n\geq 1} \T^{(\ell)}_n$.  We let $\T^{(o)}_n$ be the set of rooted ordered unlabeled trees with $n$ leaves and $\T^{(o)} = \cup_{n\geq 1} \T^{(o)}_n$.

We will be proving analogous results for trees in $\T^{(\ell)}$ and $\T^{(o)}$ where the only differences in the statements and proofs will be whether the superscript is $(\ell)$ or $(o)$.  To avoid repetition we will use $\T^*$ and $\T_n^*$ to mean that the statements and proofs are valid both when all of the $*$'s are replaced by $(\ell)$'s and when they are replaced by $(o)$'s.  For a tree $t\in \T^*$, we define $|t|$ to be the number of leaves in $t$ and $\# t$ to be the number of vertices in $t$.

\subsection{Probabilities on trees}\label{subsec probs}  In this subsection we introduce models that are analogous to the simply generated trees introduced by Meir and Moon \cite{MeMo78}, but were the size of a tree is its number of leaves rather than its number of vertices.  Let $\zeta =(\zeta_i)_{i\geq 0}$ be a sequence of numbers.  We may then define the weight of a tree $t\in \T^{*}$ to be 
\[w_\zeta(t) = \prod_{v\in t} \zeta_{\deg(v)}.\]
Here and throughout, $\deg(v)$ is the out degree of $v$, i.e., the number of children of $v$.  We will assume the following conditions:
\begin{condition}  \label{cond weights}
(i) $\zeta_i\geq 0$ for all $i$, (ii) $\zeta_0>0$, and (iii) for each $n$ we have $\sum_{t\in \T^*_n} w_\zeta (t) <\infty$.
\end{condition}

Observe that if (i) and (ii) are satisfied, then (iii) is also satisfied whenever $\zeta_1=0$, as is the case for Schr\"oder's problems.  For each $n$ such that $w_\zeta(t)>0$ for some $t\in \T^{*}_n$ we may define a probability measure on $\T^{*}_n$ by
\[Q_n^{\zeta*}(t) = \frac{w_\zeta(t)}{\sum_{s\in \T^{*}_n} w_\zeta (s)}.\]
We wish to consider generating functions, but we want an ordinary generating function for $\T^{(o)}$ and an exponential generating function for $\T^{(\ell)}$.  In order to do this all at once, for $z\in \C$, we define $y^{(\ell)}_n(z) = z^n/n!$ and $y^{(o)}_n(z) = z^n$, both for $n\geq 0$, and we use $y^*_n$ in the same fashion as $\T^*$.  The weighted generating function induced on $\T^{*}$ by $\zeta$ with the weights defined above is
\[ C^*_\zeta(z) = \sum_{t\in \T^{*}} w_\zeta(t) y^*_{|t|}(z).\]
Letting $G_{\zeta,*}(z) = \sum_{i=1}^\infty \zeta_iy^*_i(z)$, it is then easy to see that $C^*_\zeta$ satisfies the functional equation
\begin{equation}\label{eq recursion} C^*_\zeta(z) = \zeta_0 z + G_{\zeta,*}(C^*_\zeta(z)),\end{equation}
in the sense of formal power series.  Our interest is in the measures $Q^{\zeta *}_n$ and, in particular, we would like to find a Galton-Watson tree $T$ such that $Q^{\zeta(o)}_n$ is the law of $T$ conditioned to have exctly $n$ leaves.  Recall that if $(\xi_i)_{i\geq 0}$ is a distribution on $\Z_+$ with mean less than or equal to one and $\xi_0>0$, a Galton-Watson tree with offspring distribution $\xi$ is a random element $T$ of $\T^{(o)}$ with law
\[\P(T=t) = \prod_{v\in t} \xi_{\deg(v)}.\]
$T$ is called critical if $\xi$ has mean equal to one.  

\begin{proposition}\label{proposition gw to Q}
If $\zeta$ is a probability distribution with mean less or equal to one and $T$ is a Galton-Watson tree with offspring distribution $\zeta$, then the law of $T$ conditioned to have exactly $n$ leaves is $Q^{\zeta (o)}_n$.
\end{proposition}

This leads to the notion of tilting, which is similar to exponential tilting for Galton-Watson trees conditioned on their number of vertices (see \cite[p. 11]{Drmo09}).

\begin{prop}\label{prop tilting}
Suppose that $\zeta$ satisfies Condition \ref{cond weights} and suppose that $a,b>0$.  Define $\tilde \zeta$ by 
\[\tilde\zeta_0=a\zeta_0 \quad \textrm{and} \quad  \ \tilde\zeta_i =  b^{i-1}\zeta_i  \ \textrm{for } \ i\geq 1.\]
Then $Q^{\zeta *}_n=Q^{\tilde\zeta*}_n$ for all $n\geq 1$.
\end{prop}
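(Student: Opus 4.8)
The plan is to observe that the map $t\mapsto Q^{\zeta *}_n(t)$ depends on the weight sequence only through $w_\zeta$ up to an overall positive constant, namely the partition function $\sum_{s\in\T^*_n}w_\zeta(s)$, and then to show that replacing $\zeta$ by $\tilde\zeta$ multiplies $w_\zeta(t)$ by a factor $a^{n}b^{\,n-1}$ that is the \emph{same} for every $t\in\T^*_n$. Once this is in hand the two normalized measures are literally equal.

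First I would separate the contributions of leaves and internal vertices. For $t\in\T^*_n$ exactly $n$ vertices are leaves, i.e.\ have out-degree $0$ (recall the convention that when $n=1$ the unique vertex is the leaf and has $\deg=0$), so
\[ w_\zeta(t)=\zeta_0^{\,n}\!\!\prod_{\substack{v\in t\\ \deg(v)\ge 1}}\!\!\zeta_{\deg(v)},\qquad w_{\tilde\zeta}(t)=(a\zeta_0)^{n}\!\!\prod_{\substack{v\in t\\ \deg(v)\ge 1}}\!\! b^{\deg(v)-1}\zeta_{\deg(v)}=a^{n}b^{\,\alpha(t)}\,w_\zeta(t), \]
where $\alpha(t)=\sum_{v\in t,\ \deg(v)\ge 1}(\deg(v)-1)$. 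I would then evaluate $\alpha(t)$ by a standard edge count: leaves contribute $0$ to $\sum_{v\in t}\deg(v)$, and $\sum_{v\in t}\deg(v)$ equals the number of edges of $t$, which is $\#t-1$; since the number of non-leaf vertices is $\#t-n$, this gives $\alpha(t)=(\#t-1)-(\#t-n)=n-1$, independent of $t$. Hence $w_{\tilde\zeta}(t)=a^{n}b^{\,n-1}w_\zeta(t)$ for all $t\in\T^*_n$.

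Finally I would note that, since $a^{n}b^{\,n-1}>0$, summing over $\T^*_n$ shows that $\tilde\zeta$ again satisfies Condition \ref{cond weights} (finiteness of the partition function is inherited) and that $\sum_{s\in\T^*_n}w_{\tilde\zeta}(s)>0$ exactly when $\sum_{s\in\T^*_n}w_\zeta(s)>0$; for such $n$ the constant cancels and $Q^{\tilde\zeta *}_n(t)=w_{\tilde\zeta}(t)\big/\sum_{s}w_{\tilde\zeta}(s)=w_\zeta(t)\big/\sum_{s}w_\zeta(s)=Q^{\zeta *}_n(t)$. There is no real obstacle here; the only point warranting a second of care is the degenerate tree with $n=1$, where one must use that the root-leaf has out-degree $0$ so that both the displayed factorization of $w_\zeta$ and the identity $\alpha(t)=n-1$ reduce correctly (to $w_\zeta(t)=\zeta_0$ and $\alpha(t)=0$).
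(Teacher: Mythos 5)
Your proof is correct and follows exactly the same approach as the paper: the paper's entire argument is the one-line observation that $w_{\tilde\zeta}(t)=a^nb^{n-1}w_\zeta(t)$ for $t\in\T^*_n$, after which the normalizing constants cancel. You have simply spelled out the edge-counting computation behind that identity, which the paper leaves implicit.
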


\begin{proof}
This follows immediately from the computation that, for $t\in \T^{*}_n$, $w_{\tilde \zeta}(t) = a^nb^{n-1}w_\zeta(t)$.
\end{proof}
A consequence of this is that we can find a Galton-Watson tree $T$ such that $Q^{\zeta(o)}_n$ is the law of $T$ conditioned to have $n$ leaves if we can find $a,b>0$ such that 
\[a\zeta_0+ \frac{G_{\zeta,(o)}(b)}{b}=1.\]
Furthermore, $T$ will be critical if ${G_{\zeta,(o)}}'(b)=1$.  An immediate consequence of this is the following corollary.

\begin{cor}\label{cor uni ord tree} Let $\xi^{(2)}$ be as defined in Theorem \ref{thm gw schroder}.  Note that $\xi^{(2)}$ has mean $1$ and variance $4\sqrt{2}$.  Let $T$ be a Galton-Watson tree with offspring distribution $\xi^{(2)}$.  Then the law of $T$ conditioned to have $n$ leaves is uniform on the subset of $\T^{(o)}_n$ of trees with no vertices of out degree one.
\end{cor}

\begin{proof}
The proof follows immediately from the discussion above by noting that, if $\zeta_i=1$ for $i\neq 1$ and $\zeta_1=0$ then then $Q^{\zeta(o)}_n$ is uniform on $\T^{(o)}_n$.  Explicitly, the distribution $\xi^{(2)}$ is found by solving $G_{\zeta,(o)}'(b)=1$, setting $a=(b-G_{\zeta,(o)}(b))/b$,
and tilting as in Proposition \ref{prop tilting}.
\end{proof}

Given the similarities in the construction of $Q^\zeta_n$ and $Q^{\zeta(o)}_n$, there should be a natural way to go back and forth between them.

\begin{prop}\label{prop leaf-labeling} Suppose that $\zeta$ satisfies Condition \ref{cond weights} for $*=(o)$.  Define $\hat \zeta$ by $\hat \zeta_n  = n! \zeta_n$.  Then $\hat \zeta$ satisfies Condition  \ref{cond weights} for $*=(\ell)$.  Suppose that $T$ is distributed like $Q^{\zeta(o)}_n$ and let $U$ be a uniformly random ordering of $\{1,\dots, n\}$ independent of $T$.  Define $\hat T \in \T^{(\ell)}_n$ to be the tree obtained from $T$ by labeling the leaves of $T$ by $U$ and forgetting the ordering of $T$.  Then $\hat T$ is distributed like $Q^{\hat\zeta(\ell)}_n$.
\end{prop}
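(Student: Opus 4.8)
The plan is to compute $\P(\hat T = s)$ for an arbitrary $s \in \T_n$ and to check that it equals $Q^{\hat\zeta}_n(s)$. The convenient object to work with is an \emph{ordered leaf-labeled} tree with $n$ leaves: such a tree carries both a linear order of the children at every internal vertex and a bijective labeling of its $n$ leaves by $\{1,\dots,n\}$, and it has an obvious image in $\T_n$ under ``forget the ordering'' and an obvious image in $\T^{(o)}_n$ under ``forget the labels.'' The pair $(T,U)$ in the statement is exactly a uniformly random leaf-labeling of $T$: for an ordered leaf-labeled tree $t'$ whose underlying ordered tree is $t$, one has $\P((T,U)=t') = Q^{\zeta(o)}_n(t)/n!$, since each of the $n!$ labelings of the ordered leaves of $T$ is equally likely. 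And $\hat T$ is the image of $(T,U)$ under ``forget the ordering.''

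The combinatorial heart of the argument is the count: for fixed $s \in \T_n$, the number of ordered leaf-labeled trees mapping to $s$ under ``forget the ordering'' equals $\prod_{v \in s}(\deg(v))!$. Indeed, every such tree is obtained from $s$ by choosing, independently at each internal vertex $v$, one of the $(\deg(v))!$ linear orders of the children of $v$; I would verify that this assignment is a bijection by invoking the rigidity of leaf-labeled trees (a root-preserving automorphism fixes every leaf, and an easy induction on the size of the tree, splitting off the subtrees hanging from the root, then forces it to be the identity), so that the order datum at each vertex can be read off unambiguously from the resulting ordered leaf-labeled tree. Moreover, passing to the underlying ordered tree $t$ does not change the multiset of out-degrees, so $w_\zeta(t) = \prod_{v \in s}\zeta_{\deg(v)}$ for every such $t'$, and hence $w_{\hat\zeta}(s) = \bigl(\prod_{v\in s}(\deg(v))!\bigr)\, w_\zeta(t)$.

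Granting the count, set $Z = \sum_{t \in \T^{(o)}_n}w_\zeta(t)$, which is finite and positive by Condition \ref{cond weights} together with the hypothesis that $Q^{\zeta(o)}_n$ is defined. Summing the mass $Q^{\zeta(o)}_n(t)/n!$ over the $\prod_{v\in s}(\deg(v))!$ ordered leaf-labeled trees lying over $s$ gives
\[\P(\hat T = s) \;=\; \frac{1}{n!\,Z}\prod_{v \in s}(\deg(v))!\,\zeta_{\deg(v)} \;=\; \frac{1}{n!\,Z}\prod_{v\in s}\hat\zeta_{\deg(v)} \;=\; \frac{w_{\hat\zeta}(s)}{n!\,Z}.\]
Double-counting ordered leaf-labeled trees with $n$ leaves, weighted by $w_\zeta$ of their underlying ordered tree — once by grouping according to the underlying ordered tree (each contributing $n!\,w_\zeta(t)$) and once by grouping according to the image $s \in \T_n$ — yields $\sum_{s'\in\T_n}w_{\hat\zeta}(s') = n!\,Z < \infty$. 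In particular $\hat\zeta$ satisfies Condition \ref{cond weights} and $Q^{\hat\zeta}_n$ is well defined, and the displayed identity becomes $\P(\hat T = s) = w_{\hat\zeta}(s)/\sum_{s'\in\T_n}w_{\hat\zeta}(s') = Q^{\hat\zeta}_n(s)$, as desired.

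The only delicate point, and the step I would take the most care over, is the bijectivity claim in the counting step — that distinct choices of child-orderings at the vertices of $s$ genuinely give distinct ordered leaf-labeled trees. This is exactly the rigidity of leaf-labeled trees; it is routine, but I would state and prove it explicitly rather than leave it implicit.
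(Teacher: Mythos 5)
Your proof is correct and amounts to the same double-counting argument as the paper's, but it is organized a bit more cleanly. The paper conditions first on the ordered tree $T$, which forces it to introduce the intermediate quantities $\ordered(\shape(t))$ (ordered trees with a given unordered shape) and $\labels_t(\shape(x))$ (leaf-labelings of $x$ that recover $t$), and then to prove the identity $\ordered(\shape(t))\,\labels_t(\shape(t)) = \prod_{v\in\shape(t)}(\deg(v)!)$ before putting the pieces together. You instead treat the ordered leaf-labeled tree $(T,U)$ as the fundamental object from the start and count its fibers over $s\in\T_n$ directly; this sidesteps the $\ordered$/$\labels$ bookkeeping entirely, but the combinatorial fact you use — that $\prod_{v\in s}(\deg(v))!$ ordered leaf-labeled trees lie over a given $s$ — is exactly the right-hand side of the paper's identity, and the final normalization step $\sum_{s'\in\T_n}w_{\hat\zeta}(s') = n!\,\sum_{t\in\T^{(o)}_n}w_\zeta(t)$ is the same in both. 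You are also right to flag the rigidity of leaf-labeled trees (no nontrivial root- and label-preserving automorphisms) as the point that makes the fiber count a genuine bijection: the paper's justification of its counting identity — ``label an unordered tree with the appropriate shape and then order the children of each vertex'' — silently depends on the same fact, so your attention to it is well placed rather than a detour.
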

    
Results of this type connecting plane and labeled trees where the size of a tree is given by the number of its vertices can be traced back to \cite{Kolc77, Pavl78, Pavl83}.  See \cite{Pitm97} for a more complete history.  Our proposition is analogous to an implicit discussion in \cite{Aldo91a, Aldo91b} as well as Theorem 7.1 in \cite{Pitm97}, which considered the case where the size of a tree is given by the number of its vertices.  To prove this proposition, we will need some notation.  For a rooted ordered tree $x$ let $\shape(x)$ be the rooted unordered tree obtained by forgetting the order on $x$.  Similarly, for $t\in \T^{(\ell)}$, $\shape(t)$ is defined to be the rooted unlabeled tree obtained from forgetting the labeling of $t$.  For $t\in \T^{(\ell)}$, $x\in \T^{(o)}$, and a rooted unordered tree $y$ define $\labels_t(x)$ to be the number of ways to label the leaves of $x$ such that when order on $x$ is forgotten the resulting tree is $t$ and $\ordered(y)$ to be the number of ordered trees whose shape is $y$.  Observe that $\labels_t(x)$ depends only on $\shape(x)$, so we will abuse our notation and write $\labels_t(\shape(x))$.

\begin{proof}
Let $t$ be an element of $\T_n^{(\ell)}$.  Observe that
\[\P(\hat T= t) = \sum_{x\in \T_n^{(o)}} \P(T=x)\P(\hat T=t | T=x).\] 
Furthermore, observe that
\[\P(\hat T=t | T=x) = \frac{\labels_t(\shape(x))}{n!},\]
Observe that $\labels_t(\shape(x))=0$ unless $\shape(t)=\shape(x)$.  Furthermore, $\P(T=x)$ depends only on $\shape(x)$, and is given by
\[\P(T=x) = \frac{\prod_{v\in \shape(x)} \zeta_{\deg(v)}}{\sum_{s\in \T^{(o)}_n} w_\zeta(s)}.\]
Consequently we have
\begin{equation}\label{eq gw 1} \P(\hat T= t)=\frac{\ordered(\shape(t)) \left(\prod_{v\in \shape(t)} \zeta_{\deg(v)}\right)\frac{\labels_t(\shape(t))}{n!}}{\sum_{s\in \T^{(o)}_n} w_\zeta(s)}.\end{equation}
But 
\begin{equation}\label{eq gw 2}\ordered(\shape(t))\labels_t(\shape(t))= \prod_{v\in \shape(t)}(\deg(v)!).\end{equation}
This is because both sides count the number of distinct leaf-labeled ordered trees that equal $t$ upon forgetting their order.  On the left hand side, count by picking an ordered tree and then labeling it and, on the right hand side, count by labeling an unordered tree with the appropriate shape and then ordering the children of each vertex.

Therefore we have
\[ \P(\hat T= t) = \frac{ w_{\hat{\zeta}}(t)}{n!\sum_{s\in \T^{(o)}_n} w_\zeta(s)}.\]
The last step is to observe that 
\[n!\sum_{s\in \T^{(o)}_n} w_\zeta(s) = \sum_{s\in \T^{(\ell)}_n} w_{\hat\zeta}(s).\]
This is because for $s\in \T^{(o)}_n$, there are $n!$ rooted ordered leaf-labeled trees whose ordered tree is $s$ upon forgetting the labeling, so the left hand side is the weighted number of rooted ordered leaf-labeled trees with $n$ leaves.  Furthermore, we have already noted above that for $s\in \T^{(\ell)}_n$, there are $\prod_{v\in s}(\deg(v)!)$ rooted ordered leaf-labeled trees whose labeled tree is $s$ upon forgetting the ordering.  Thus the right hand side is also the weighted number of rooted ordered leaf-labeled trees with $n$ leaves.  Note that this step also shows that $\hat \zeta$ satisfies Condition \ref{cond weights} for $*=(\ell)$.
\end{proof}

Combining with tilting, we have the following corollary.

\begin{cor}\label{cor leaf-labeled gw} Let $\zeta$ satisfy Condition \ref{cond weights} with $*=(\ell)$ and $\zeta_0=1$.  Suppose there exist $r>0$ and $s>0$ satisfying $s=r+G_{\xi,(\ell)}(s)$ and $G_{\xi,(\ell)}'(s)\leq 1$.  Define $\xi=(\xi_i)_{i=0}^\infty$ by $\xi_0 = rs^{-1}$ and $\xi_j = s^{j-1}\zeta_j/j!$ for $j\geq 1$.  Note that $\xi$ is a probability distribution on $\Z_+$.  Let $T$ be a Galton-Watson tree with offspring distribution $\xi$ and construct $\hat T$ by labeling the leaves of $T$ uniformly at random with $\{1,\dots, |T|\}$, independently of $T$ and then forget the order of $T$.  Then $\P(\hat T\in \cdot | |T|=n) = Q^{\zeta(\ell)}_n(\cdot)$ for all $n\geq 1$ such that $Q^{\zeta(\ell)}_n$ is defined.  Furthermore, for $n$ such that $Q^{\zeta(\ell)}_n$ is not defined, $P(|T|=n)=0$.
\end{cor}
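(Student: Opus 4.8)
The plan is to deduce this from the leaf-labeling correspondence (Proposition \ref{prop leaf-labeling}) combined with the tilting invariance (Proposition \ref{prop tilting}), in exact parallel with the derivation of Corollary \ref{cor uni ord tree}. The key move is to pass to the ordered side via $\nu_i \deq \zeta_i/i!$, so that $\nu_0 = \zeta_0 = 1$ and $\hat\nu = \zeta$ in the notation of Proposition \ref{prop leaf-labeling}. First I would check that $\nu$ satisfies Condition \ref{cond weights} for $* = (o)$: nonnegativity and $\nu_0 > 0$ are immediate, while $\sum_{t \in \T^{(o)}_n} w_\nu(t) < \infty$ follows from the identity $n!\sum_{t \in \T^{(o)}_n} w_\nu(t) = \sum_{t \in \T_n} w_{\hat\nu}(t) = \sum_{t \in \T_n} w_\zeta(t)$ established inside the proof of Proposition \ref{prop leaf-labeling}, together with Condition \ref{cond weights}(iii) for $\zeta$.

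Next I would identify $\xi$ as the $(a,b)$-tilt of $\nu$ in the sense of Proposition \ref{prop tilting}, with $a = rs^{-1}$ and $b = s$, both positive since $r,s>0$: indeed $a\nu_0 = rs^{-1} = \xi_0$ and $b^{i-1}\nu_i = s^{i-1}\zeta_i/i! = \xi_i$ for $i \ge 1$. Proposition \ref{prop tilting} then gives $Q^{\xi(o)}_n = Q^{\nu(o)}_n$ for every $n$. That $\xi$ is a probability distribution is a short computation: $\xi_i \ge 0$ for all $i$, and since $\sum_{i \ge 1}\nu_i s^i = G_{\nu,(o)}(s) = \sum_{i \ge 1}\zeta_i s^i/i!$, the hypothesis on $r,s$ says precisely that $\sum_{i \ge 0}\xi_i = s^{-1}\bigl(r + G_{\nu,(o)}(s)\bigr) = 1$. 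The offspring mean is $\sum_{i \ge 1} i\,\xi_i = \sum_{i \ge 1} \zeta_i s^{i-1}/(i-1)! = G_{\nu,(o)}'(s) \le 1$ by hypothesis, and $\xi_0 > 0$, so $T$ is a well-defined, almost surely finite Galton--Watson tree in $\T^{(o)}$.

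Because $\xi$ is a probability distribution, $\P(T = t) = \prod_{v \in t}\xi_{\deg(v)} = w_\xi(t)$ for $t \in \T^{(o)}$, hence $\P(|T| = n) = \sum_{t \in \T^{(o)}_n} w_\xi(t)$, and for each $n$ with $\P(|T| = n) > 0$ the conditional law $\P(T \in \cdot \mid |T| = n)$ equals $Q^{\xi(o)}_n = Q^{\nu(o)}_n$. Conditionally on $|T| = n$, the leaf-labeling used to build $\hat T$ is still a uniformly random bijection onto $\{1,\dots,n\}$ independent of $T$, so Proposition \ref{prop leaf-labeling} applied with $\nu$ in the role of its $\zeta$ yields $\P(\hat T \in \cdot \mid |T| = n) = Q^{\hat\nu}_n = Q^\zeta_n$, which is the first assertion. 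For the last sentence, the tilting computation $w_\xi(s) = a^n b^{n-1} w_\nu(s)$ for $s \in \T^{(o)}_n$ together with the identity of the first step gives $\P(|T| = n) = a^n b^{n-1}(n!)^{-1}\sum_{t \in \T_n} w_\zeta(t)$; since $a,b>0$ this vanishes exactly when $\sum_{t \in \T_n} w_\zeta(t) = 0$, i.e.\ exactly when $Q^\zeta_n$ is undefined, so the conditioning in the statement is legitimate for precisely those $n$ for which $Q^\zeta_n$ exists.

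All ingredients are already in hand, so I do not anticipate a genuine obstacle; the only point demanding care is keeping the three changes of measure aligned — the division by $i!$ from $\zeta$ to $\nu$, the $(a,b)$-tilt from $\nu$ to $\xi$, and the uniform relabeling from $T$ to $\hat T$ — and, relatedly, confirming that the exceptional values of $n$ on the two sides (where $Q^\zeta_n$ fails to exist, respectively where $\P(|T| = n) = 0$) coincide.
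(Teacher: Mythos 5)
Your proposal is correct and carries out exactly the argument the paper has in mind (the paper gives no explicit proof, only the remark that the corollary follows by combining Proposition \ref{prop leaf-labeling} with Proposition \ref{prop tilting}). You correctly unwind the three changes of measure (division by $i!$ from $\zeta$ to $\nu$, the $(a,b)=(rs^{-1},s)$ tilt from $\nu$ to $\xi$, and the uniform relabeling from $T$ to $\hat T$), verify the Condition \ref{cond weights} hypothesis on the ordered side, use the characteristic system to check that $\xi$ is a subcritical or critical probability distribution, and handle the exceptional $n$ via the explicit tilting factor $a^n b^{n-1}$; you also correctly read the paper's $G_\xi$ as $G_\zeta$ (equivalently $G_{\nu,(o)}$), which is the only interpretation under which $\sum_i \xi_i = 1$.
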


\subsection{Schr\"oder's problems}\label{subsec schroder}
In this section we record which of the trees above correspond to the trees that appear in Schr\"oder's problems.  The proofs of the claims here are simple applications of the results in Section \ref{subsec probs}.

\textbf{The first problem:}  The trees here are uniform binary rooted ordered unlabeled trees.  We can obtain these by taking $*=(o)$ and $\zeta_0=\zeta_2=1$ and $\zeta_i=0$ for $i\notin \{0,2\}$.  Letting $\xi$ be the probability distribution given by $\xi_0=\xi_2=1/2$ and $T$ be a Galton-Watson tree with offspring distribution $\xi$, we have that $T$ conditioned to have $n$ leaves is a uniform binary rooted ordered unlabeled tree with $n$ leaves.  Also note that $T$ is critical and the variance of $\xi$ is equal to one.

\textbf{The second problem:}  These are uniform rooted ordered trees with no vertices of out degree one.  These were dealt with in Corollary \ref{cor uni ord tree}

\textbf{The third problem:}  These are uniform binary unordered leaf-labeled trees.  We can obtain these by taking $*=(\ell)$ and $\zeta_0=\zeta_2=1$ and $\zeta_i=0$ for $i\notin \{0,2\}$.  In this case, if $T$ is the Galton-Watson tree defined in the first problem and $\hat T$  is defined as in Corollary \ref{cor leaf-labeled gw}, then $\hat T$ conditioned to have $n$ leaves is a uniform binary unordered leaf-labeled tree with $n$ leaves.

\textbf{The fourth problem:}  These are uniform rooted unordered leaf-labeled trees with no vertices with out-degree $1$.  We can obtain these by taking $*=(\ell)$ and $\zeta_1=0$ and $\zeta_i=1$ for $i\neq 1$.  We define a probability distribution $\xi^{(4)}$ as in Theorem \ref{thm gw schroder}.  Note that $\xi^{(4)}$ has mean $1$ and variance $\var(\xi^{(4)}) = 2\log 2$.  Letting $T$ be a Galton-Watson tree with offspring distribution $\xi^{(4)}$ and defining $\hat T$  is as in Corollary \ref{cor leaf-labeled gw}, we have that $\hat T$ conditioned to have $n$ leaves is a uniform unordered leaf-labeled tree with no vertices of out degree one and $n$ leaves.

\subsection{Gibbs trees}
Above we saw a natural way to put probability measures on $\T^{(\ell)}_n$ that are concentrated on fragmentation trees (the trees appearing in Schr\"oder's fourth problem); namely, take $\zeta_1=0$.  Another natural type of probability to put on fragmentation trees is a \textit{Gibbs model}, which we now describe.  First, we need to set up the natural framework in which to view fragmentation trees.  The idea is that, while in Schr\"oder's fourth problem we have an arbitrary set bracketing, for fragmentations we recursively partition a set.  This dynamic view of constructing a set bracketing makes Gibbs models quite natural.

\begin{Def}[\cite{McPW08}]
A fragmentation of the finite set $B$ is a collection $\mathbf{t}_B$ of non-empty subsets of $B$ such that
\begin{enumerate}
\item $B\in \mathbf{t}_B$
\item If $\# B\geq 2$ then there is a partition of $B$ into $k\geq 2$ parts $B_1,\dots,B_k$, called the children of $B$, such that
\[\mathbf{t}_B = \{B\}\cup\mathbf{t}_{B_1}\cup\cdots\cup \mathbf{t}_{B_k},\]
where $\mathbf{t}_{B_i}$ is a fragmentation of $B_i$.
\end{enumerate}
\end{Def}

We can naturally consider $\mathbf{t}_B$ as a tree whose vertices are the elements of $\mathbf{t}_B$ and whose edges are defined by the parent-child relationship.   Considering the properties of such a tree leads naturally to the following definition of a \emph{fragmentation tree} on $B$.

\begin{Def}
A \textit{fragmentation tree} $T$ on $n$ leaves is a rooted tree such that 
\begin{enumerate}
\item The root of $T$ does not have degree $1$,
\item $T$ has no non-root vertices of degree $2$,
\item The leaves of $T$ are labeled by a set $B$ with $\#B=n$.   We denote the label of a leaf $v$ by $\ell(v)$.
\end{enumerate}
\end{Def}

The idea of the Gibbs model is that, at each step in the fragmentation the next step is distributed according to multiplicative weights depending on the block sizes.  We first take a sequence $\{\alpha_n\}$, $\alpha_n\geq 0$ of weights and a Gibbs weight, which is a function $g:\Z_+\to \R_+$ with $g(0)=0$ and $g(1)>0$.   Then, for $n\geq 2$, define a normalization constant
\[Z(n) = \sum_{\{B_1,\dots,B_k\}} \alpha_k \prod_{j=1}^kg(\#B_j),\]
where the sum is over unordered partitions of $[n]$ into at least two blocks.  Whenever we write a formula like this, we assume that each block $B_i$ is nonempty.   For $n$ such that $Z(n)>0$, define the probability of a partition of $[n]$ by 
\[ P^{g,\alpha}_n(\{B_1,\dots,B_k\}) = p(\#B_1,\dots,\#B_k) = \frac{\alpha_k\prod_{j=1}^kg(\#B_j)}{Z(n)}.\]
The probability of a fragmentation $X$ of $[n]$ is then defined as
\[ P^{g,\alpha}_n(X) = \prod_{B\in X} P^{g,\alpha}_n(\{B_1,\dots,B_k\}),\]
where $\{B_1,\dots, B_k\}$ are the children of $B$.   Using the correspondence between fragmentations and fragmentation trees, for $T_n\in \T^{(\ell)}_n$, we define $P^{g,\alpha}_n(T_n)$ to be $P^{g,\alpha}_n(X)$ where $X$ is the fragmentation determined by $T_n$.   The probabilistic properties of Gibbs models are studied in \cite{McPW08}.      

\begin{thm}
Suppose that $\zeta$ satisfies Condition \ref{cond weights} with $*=(\ell)$ and $\zeta_1=0$.  Define $\alpha_k=\zeta_k$ and $g(k) = k![z^k]C^{(\ell)}_\zeta(z)$.  Then $Z(n) = g(n)$, $Q^{\zeta(\ell)}_n$ and $P^{g,\alpha}_n$ are defined for the same values of $n$, and $Q^{\zeta(\ell)}_n = P^{g,\alpha}_n$ when they are defined.  Furthermore, given a nonnegative weight sequence $\alpha$ and a Gibbs weight $g$ such that $Z(n)=g(n)$, there is a $\zeta$ satisfying Condition \ref{cond weights} with $*=(\ell)$ and $\zeta_1=0$ such that $Q^{\zeta(\ell)}_n = P^{g,\alpha}_n$.
\end{thm}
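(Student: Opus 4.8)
The plan is to use the fact that a rooted unordered leaf-labeled tree is, recursively, a root together with leaf-labeled subtrees whose leaf-label sets partition $[n]$ — which is exactly the recursive structure of a fragmentation of $[n]$ — so that both assertions reduce to one combinatorial identity and an induction on $n$. To begin I would record the basic dictionary: since $C_\zeta(z)=C^*_\zeta(z)$ (with $*$ nothing) is by definition the exponential generating function $\sum_{t\in\T}w_\zeta(t)z^{|t|}/|t|!$, extracting the $z^k$ coefficient gives $g(k)=k![z^k]C_\zeta(z)=\sum_{t\in\T_k}w_\zeta(t)$, which is finite by Condition \ref{cond weights}(iii); moreover, since $\zeta_1=0$ the series $G_{\zeta,*}$ has no linear term, so reading the functional equation \eqref{eq recursion} at order $z^1$ gives $g(1)=\zeta_0>0$.

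For $Z(n)=g(n)$ with $n\ge 2$ I would decompose trees in $\T_n$ by the root: a tree $t\in\T_n$ is the same data as an unordered partition $\{B_1,\dots,B_k\}$ of $[n]$ into $k\ge 1$ nonempty blocks together with a choice, for each $j$, of a leaf-labeled tree on the label set $B_j$ (the subtree hanging from the $j$-th child of the root), the root then having out-degree $k$. Since $w_\zeta$ is multiplicative over the tree and $\sum_s w_\zeta(s)=g(\#B)$ when $s$ runs over trees labeled by $B$ (relabeling invariance), summing weights over this bijection gives
\[ g(n)=\sum_{t\in\T_n}w_\zeta(t)=\sum_{\{B_1,\dots,B_k\}}\zeta_k\prod_{j=1}^k g(\#B_j), \]
where the $k=1$ summand equals $\zeta_1 g(n)=0$. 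With $\alpha_k=\zeta_k$ the right-hand side is precisely the definition of $Z(n)$, so $Z(n)=g(n)$.

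To get $Q^\zeta_n=P^{g,\alpha}_n$ I would induct on $n$: for $n=1$ both sides are the unit mass on the one-leaf tree, and for $n\ge 2$, writing $t\in\T_n$ via its root-subtrees $t_1,\dots,t_k$ on blocks $B_1,\dots,B_k$ as above, multiplicativity gives $w_\zeta(t)=\zeta_k\prod_j w_\zeta(t_j)$; dividing by $g(n)=Z(n)$, rewriting $w_\zeta(t_j)=g(\#B_j)\,Q^\zeta_{\#B_j}(t_j)$, and using $\alpha_k=\zeta_k$ yields
\[ Q^\zeta_n(t)=\frac{\alpha_k\prod_{j=1}^k g(\#B_j)}{Z(n)}\prod_{j=1}^k Q^\zeta_{\#B_j}(t_j)=P^{g,\alpha}_n(B_1,\dots,B_k)\prod_{j=1}^k Q^\zeta_{\#B_j}(t_j). \]
By the inductive hypothesis each $Q^\zeta_{\#B_j}(t_j)$ equals $P^{g,\alpha}_{\#B_j}(t_j)$, and since the fragmentation of $[n]$ coded by $t$ is the split $\{B_1,\dots,B_k\}$ followed by the fragmentations coded by the $t_j$, the right-hand side equals $P^{g,\alpha}_n(t)$ by the product-over-blocks definition of $P^{g,\alpha}_n$. (Trees with $w_\zeta(t)=0$ are covered by the same display: such a $t$ has an internal vertex $v$ with $\zeta_{\deg(v)}=\alpha_{\deg(v)}=0$, so both sides vanish.)

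For the converse, given $\alpha\ge 0$ and a Gibbs weight $g$ with $Z(n)=g(n)$, I would set $\zeta_0:=g(1)$, $\zeta_1:=0$, and $\zeta_k:=\alpha_k$ for $k\ge 2$. Condition \ref{cond weights} holds — (iii) automatically because $\zeta_1=0$, and (i)--(ii) because $g(1)>0$ (if instead $g(1)=0$ then $Z(n)=g(n)=0$ for every $n\ge 2$, so $P^{g,\alpha}_n$ is undefined there and there is nothing to prove). A strong induction, using the recursion of the second paragraph for this $\zeta$ and the hypothesis $Z(n)=g(n)$, shows $g_\zeta(n):=\sum_{t\in\T_n}w_\zeta(t)$ equals $g(n)$ for all $n$: it holds at $n=1$ since $g_\zeta(1)=\zeta_0=g(1)$, and the inductive step is $g_\zeta(n)=\sum_{\{B_1,\dots,B_k\}}\zeta_k\prod_j g_\zeta(\#B_j)=\sum_{\{B_1,\dots,B_k\}}\alpha_k\prod_j g(\#B_j)=Z(n)=g(n)$. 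Since then $k![z^k]C_\zeta(z)=g_\zeta(k)=g(k)$, applying the forward direction to this $\zeta$ gives $Q^\zeta_n=P^{g_\zeta,\alpha}_n=P^{g,\alpha}_n$. There is no serious obstacle; the two points to handle with care are the structural bijection used above (that passing from a rooted \emph{unordered} leaf-labeled tree to the pair ``partition of its labels, tree on each block'' is a genuine bijection with no spurious multiplicities) and the identification $g(k)=k![z^k]C_\zeta(z)=\sum_{t\in\T_k}w_\zeta(t)$, i.e. reading \eqref{eq recursion} as an identity of exponential generating functions. Everything else is bookkeeping of multiplicative weights.
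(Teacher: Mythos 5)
Your proof is correct and takes essentially the same route as the paper: you identify $g(n)=\sum_{t\in\T_n}w_\zeta(t)$, derive $Z(n)=g(n)$ from the root decomposition of trees (which is the combinatorial content behind the paper's coefficient extraction from the functional equation for $C_\zeta$), and then prove $Q^\zeta_n=P^{g,\alpha}_n$ by the same induction the paper merely sketches. You fill in more detail than the published proof does, notably in verifying Condition~\ref{cond weights} for the converse $\zeta$ and in noting the degenerate case $g(1)=0$, but the skeleton is identical.
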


\begin{proof} Since the number of partitions of $[n]$ into $k$ ordered nonempty blocks with sizes $n_1,\dots, n_k$ is given by the multinomial coefficient, we see that for $n\geq 2$
\[Z(n) = \sum_{\{B_1,\dots,B_k\}} \alpha_k \prod_{j=1}^kg(\#B_j) =\sum_{k=2}^\infty \frac{\alpha_k}{k!}\sum_{(n_1,\dots,n_k)\in \N^k\atop n_1+\cdots+n_k=n}{ n\choose n_1,\dots,n_k}\prod_{j=1}^kg(n_j),\]
where the $1/(k!)$ appears because the partitions $\{B_1,\dots, B_k\}$ are unordered.  Note also that one way to count the weighted number of trees of size $n$ is to decompose by the degree of the root and the sizes of the subtrees attached to the root.  Doing so yields the formula  
\[n![z^n]C^{(\ell)}_\zeta(z)  =\sum_{k=2}^\infty \frac{\alpha_k}{k!}\sum_{(n_1,\dots,n_k)\in \N^k\atop n_1+\cdots+n_k=n}{ n\choose n_1,\dots,n_k}\prod_{j=1}^k\left(n_j![z^{n_j}]C^{(\ell)}_\zeta(z)\right). \]
Since $g(k) = k![z^k]C^{(\ell)}_\zeta(z)$ by definition, it follows that $Z(n) = g(n)$.   Using this, one proves inductively that $P^{g,\alpha}_n(T_n) = Q^{\zeta(\ell)}_n(T_n)$.   Furthermore, observe that the condition $Z(n)=g(n)$ implies that there is a weight sequence $(\zeta_i)_{i\geq 0}$ from which the fragmentation model can be derived in the above manner; just take $\zeta_0=g(1)$, $\zeta_1=0$, and $\zeta_k = \alpha_k$ for $k\geq 2$.
\end{proof}

When we have $Z(n) = g(n)$, the model is called a \textit{combinatorial} Gibbs model.  This is justified by the fact that, in this case, $Z(n)$ (and thus $g(n)$) is the weighted number of trees with $n$ leaves.  For example, if we let $g(n)$ be the number of fragmentation trees with $n$ leaves, and $\alpha_k=1$ for $k\geq 2$, we then see that 

\[Z(n) = \sum_{\{B_1,\dots,B_k\}} \prod_{j=1}^kg(\#B_j).\]
The right hand side of this equation is just the sum over partitions at the root of a fragmentation tree with $n$ leaves of the number of fragmentation trees with that partition at the root, which is precisely the number of fragmentation trees with $n$ leaves.  That is, $Z(n)=g(n)$.

Note that combinatorial Gibbs models are a generalization of the hierarchies studied in  \cite{FlSe09} and, as previously observed, a special case of the Gibbs models introduced in \cite{McPW08}.

\section{Scaling limits}\label{sec scaling limits}
We now turn to scaling limits of the models of trees we have been discussing.  Fortunately for us, the heavy lifting has already been done in \cite{Rizz11}.  In order to use the results from that paper, we must first introduce the formalism required to handle limits of random metric measure spaces.

\subsection{Trees as metric measure spaces}
The trees we have been talking about can naturally be considered as metric spaces with the graph metric.  That is, the distance between to vertices is the number of edges on the path connecting them.  Let $(t,d)$ be a tree equipped with the graph metric.  For $a>0$, we define $at$ to be the metric space $(t,ad)$, i.e. the metric is scaled by $a$.  This is equivalent to saying the edges have length $a$ rather than length $1$ in the definition of the graph metric.  More, generally we can attach a positive length to each edge in $t$ and use these to in the definition of the graph metric.  Moreover, the trees we are dealing with are rooted so we consider $(t,d)$ as a pointed metric space with the root as the point.  Moreover, we are concerned with the leaves, so we attach a measure $\mu_{ t}$, which is the uniform probability measure on the leaves of $t$.  If we have a random tree $T$, this gives rise to a random pointed metric measure space $(T,d,\textrm{root},\mu_{T})$.  To make this last concept rigorous, we need to put a topology on pointed metric measure spaces.  This is hard to do in general, but note that the pointed metric measure spaces that come from the trees we are discussing are compact.

Let $\M$ be the set of equivalence classes of compact pointed metric measure spaces (equivalence here being up to point and measure preserving isometry).  It is worth pointing out that $\M$ actually is a set in the sense of ZFC, though this takes some work to show.  We metrize $\M$ with the pointed Gromov-Hausdorff-Prokhorov metric (see \cite{HaMi12}).  Fix $(X,d,\rho,\mu), (X',d',\rho,\mu') \in \M$ and define
\[ d_{\textrm{GHP}}(X,X') = \inf_{(M,\delta)} \inf_{\phi:X\to M \atop \phi' :X'\to M}\left[ \delta(\phi(\rho),\phi'(\rho')) \vee d_H(\phi(X),\phi'(X'))\vee d_P(\phi_*\mu,\phi'_*\mu')\right],\]
where the first infimum is over metric spaces $(M,\delta)$, the second infimum if over isometric embeddings $\phi$ and $\phi'$ of $X$ and $X'$ into $M$, $d_H$ is the Hausdorff distance on compact subsets of $M$, and $d_P$ is the Prokhorov distance between the pushforward $\phi_*\mu$ of $\mu$ by $\phi$ and the pushforward $\phi'_*\mu'$ of $\mu'$ by $\phi'$.  Again, the definition of this metric has potential to run into set-theoretic difficulties, but they are not terribly difficult to resolve.

\begin{prop}[Proposition 1 in \cite{HaMi12}] 
The space $(\M, d_{\textrm{GHP}})$ is a complete separable metric space.
\end{prop}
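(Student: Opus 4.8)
The plan is to establish the three assertions in turn --- that $d_{\textrm{GHP}}$ is a genuine metric on $\M$ (not merely a pseudometric), that $(\M,d_{\textrm{GHP}})$ is separable, and that it is complete --- using throughout the compactness of the spaces in $\M$, which is what makes the extraction and exhaustion arguments below go through. Finiteness of $d_{\textrm{GHP}}(X,X')$ is immediate by gluing $X$ and $X'$ along a segment joining their roots, and symmetry is trivial. The triangle inequality follows from the standard gluing construction: given near-optimal embeddings of $X,X'$ into $M$ and of $X',X''$ into $M'$, form the metric pushout of $M$ and $M'$ over the two copies of $X'$ (gluing them isometrically along that common copy), and use that $\delta_H$ and $\delta_P$ each satisfy the triangle inequality inside the glued space. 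The one substantive metric axiom is the separation property: if $d_{\textrm{GHP}}(X,X')=0$, then there is a root- and measure-preserving isometry $X\to X'$. To get this, take embeddings $\phi_n\colon X\to M_n$, $\phi_n'\colon X'\to M_n$ along which the bracketed expression in the definition tends to $0$; restricting to a countable dense subset of $X$ and using the Hausdorff bound, extract along a subsequence a limiting distance-preserving map $X\to X'$ (an Arzel\`a--Ascoli argument, using compactness of $X'$), then check from Prokhorov convergence of the pushforwards that it carries $\mu$ to $\mu'$ and $\rho$ to $\rho'$; surjectivity follows by the symmetric argument.

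For separability, let $\D$ be the countable family of equivalence classes of finite pointed metric measure spaces with finitely many points, one designated as root, rational pairwise distances obeying the triangle inequality, and measure a rational convex combination of point masses. Given $(X,d,\rho,\mu)\in\M$ and $\varepsilon>0$, pick a finite $\varepsilon$-net $F=\{x_0=\rho,x_1,\dots,x_k\}$ of $X$, a measurable nearest-point map $\pi\colon X\to F$, and equip $F$ with the restriction of $d$ and the measure $\pi_*\mu$; the identity embeddings $F\hookrightarrow X$ and $X\hookrightarrow X$ give $d_{\textrm{GHP}}\big(X,(F,d,\rho,\pi_*\mu)\big)\le\varepsilon$, since $F$ is an $\varepsilon$-net and $\pi$ displaces mass by at most $\varepsilon$, and rounding the finitely many distances and weights to rationals costs at most another $\varepsilon$. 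Hence $\D$ is dense in $\M$.

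For completeness, let $(X_n)$ be $d_{\textrm{GHP}}$-Cauchy; passing to a subsequence assume $d_{\textrm{GHP}}(X_n,X_{n+1})<2^{-n}$. Inductively build a metric space $Z$ by gluing, at stage $n$, a space realising $X_n$ and $X_{n+1}$ onto the space constructed so far along its copy $\hat X_n$ of $X_n$ (the standard pushout extension of the metric), and take the completion of the union; write $\hat X_n\subseteq Z$, $\hat\rho_n$, $\hat\mu_n$ for the resulting copies of $X_n$, its root, and the pushforward of $\mu_n$, so that $\delta_H(\hat X_n,\hat X_{n+1})$, $\delta_P(\hat\mu_n,\hat\mu_{n+1})$, and $\mathrm{dist}(\hat\rho_n,\hat\rho_{n+1})$ are all $O(2^{-n})$. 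The closure $K$ of $\bigcup_n\hat X_n$ in $Z$ is compact: for $\varepsilon>0$ choose $n$ with $2^{-n}<\varepsilon$; a finite $\varepsilon$-net of the compact set $\hat X_n$ is a $2\varepsilon$-net of every $\hat X_m$ with $m\ge n$, and adjoining finite $\varepsilon$-nets of $\hat X_1,\dots,\hat X_{n-1}$ exhibits $K$ as totally bounded, hence (being closed in the complete space $Z$) compact. Then $(\hat X_n)$ converges in $\delta_H$ to a compact $X_\infty\subseteq K$, $(\hat\mu_n)$ converges in $\delta_P$ to a probability measure $\mu_\infty$ on $K$ (the space of Borel probability measures on the compact set $K$ being complete for the Prokhorov metric), $\mu_\infty$ is supported on $X_\infty$ since the $\hat\mu_n$ concentrate near $X_\infty$, and $\hat\rho_n\to\rho_\infty\in X_\infty$; thus $(X_\infty,d,\rho_\infty,\mu_\infty)\in\M$ and $d_{\textrm{GHP}}(X_n,X_\infty)\to 0$ via the tautological embeddings into $K$.

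The genuinely non-formal steps are the two compactness arguments: the Arzel\`a--Ascoli extraction underlying the separation property, and the verification that the exhausting union $K$ in the completeness proof is compact (equivalently, that a $d_{\textrm{GHP}}$-Cauchy sequence of compact spaces is uniformly totally bounded). Everything else is bookkeeping with the triangle inequalities for the Hausdorff and Prokhorov distances.
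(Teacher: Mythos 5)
The paper does not prove this proposition; it is quoted verbatim from Abraham--Delmas--Hoscheit/Haas--Miermont-type sources (here cited as Proposition~1 of \cite{HaMi10}) and treated as an imported fact. So there is no internal proof to compare against. Your argument is the standard one found in that literature, and the overall architecture --- metric axioms with the separation property via an Arzel\`a--Ascoli extraction, separability via finite rational approximants, completeness via iterated gluing and total boundedness of the exhausting union --- is sound.

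Two points are worth tightening. First, in the separability step, ``rounding the finitely many distances to rationals'' does not automatically preserve the triangle inequality; the clean fix is to Kuratowski-embed the finite $\varepsilon$-net isometrically into $(\mathbb{R}^k,\|\cdot\|_\infty)$ and perturb each image point to one with rational coordinates, so that $\D$ can simply be taken to be the finite rational subsets of $(\mathbb{Q}^k,\|\cdot\|_\infty)$ for $k\ge 1$, each with designated root and rational atomic measure. Second, in the separation step you extract a single distance-preserving map $f\colon X\to X'$ and invoke symmetry for surjectivity; to close this one should note that the composite $g\circ f\colon X\to X$ of the two extracted maps is a distance-preserving self-map of a compact metric space and hence surjective, which forces both $f$ and $g$ to be isometries. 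The verification that $f_*\mu=\mu'$ also deserves a sentence: pull a bounded Lipschitz test function on $X'$ back to an equi-Lipschitz extension on $M_n$, integrate against both pushforward measures, and pass to the limit using the Prokhorov bound together with the pointwise convergence defining $f$. With these details filled in, the proof is complete and matches the standard one.
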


An $\R$-tree is a complete metric space $(T,d)$ with the following properties:
\begin{itemize}
\item For $v,w\in T$, there exists a unique isometry $\phi_{v,w}:[0,d(v,w)]\to T$ with $\phi_{v,w}(0)=v$ and $\phi_{v,w}(d(v,w))=w$.
\item For every continuous injective function $c:[0,1]\to T$ such that $c(0)=v$ and $c(1)=w$, we have $c([0,1]) = \phi_{v,w}([0,d(v,w)])$.
\end{itemize}

If $(T,d)$ is an $\R$-tree, every choice of root $\rho\in T$ and probability measure $\mu$ on $T$ yields an element $(T,d,\rho,\mu)$ of $\M$.  With this choice of root also comes a height function $\Ht(v) = d(v,\rho)$.  The leaves of $T$ can then be defined as a point $v\in T$ such that $v$ is not in $[[\rho,w[[ \deq \phi_{\rho,w}([0,\Ht(w)))$ for any $w\in T$.  The set of leaves is denoted $\mathcal{L}(T)$.  

\begin{Def}
A continuum tree is an $\R$-tree $(T,d,\rho,\mu)$ with a choice of root and probability measure such that $\mu$ is non-atomic, $\mu(\mathcal{L}(T))=1$, and for every non-leaf vertex $w$, $\mu\{v\in T : [[\rho,v]]\cap [[\rho,w]] = [[\rho,w]]\}>0$.
\end{Def}

The last condition says that there is a positive mass of leaves above every non-leaf vertex.  We will usually just refer to a continuum tree $T$, leaving the metric, root, and measure as implicit.  A continuum random tree (CRT) is an $(\M,d_{GHP})$ valued random variable that is almost surely a continuum tree.

\subsection{The Brownian continuum random tree}
Continuous functions give a nice way of constructing $\R$-trees.  Suppose that $f:[0,1]\to \R_+$ is continuous and $f(0)=f(1)=0$.  We can define a pseudo-metric on $[0,1]$ by $d_f(a,b) = f(a)+f(b) - 2\min_{a\leq t\leq b} f(t)$ for $a\leq b$.  Define an equivalence relation by $a\sim b$ if and only if $d_f(a,b)=0$.  Letting $T_f=[0,1]/\sim$, we obtain a metric space $(T_f,d_f)$.  Theorem 2.1 in \cite{DuLe05} tells us that $(T_f,d_f)$ is a compact $\R$-tree.  Letting $\rho_f:[0,1] \to T_f$ be the natural map taking a point to its equivalence class, we can take $\rho_f(0)$ as the root of $T_f$.  For a probability measure $\mu$ on $[0,1]$, we then obtain an element of $\M$ by equipping $T_f$ with the pushforward $\mu_f$ of $\mu$ by $\rho_f$.  

\begin{Def} Let $(e(t),0\leq t\leq 1)$ be standard Brownian excursion.  The Brownian continuum random tree, denoted $T^{Br}$, is the continuum random tree $(T_e,d_e,\rho_e(0),\lambda_e)$, where $\lambda$ is Lebesgue measure on $[0,1]$. 
\end{Def}

Note that the elementary properties of Brownian excursion imply that $T^{Br}$ actually is almost surely a continuum tree.  It is also worth noting that in our formalism the Brownian continuum random tree originally defined by Aldous in \cite{Aldo93} corresponds to $(T_{2e},d_{2e},\rho_e(0),\mu_{2e})$, but the convention has since shifted to the one we have adopted here (see e.g. \cite{Gall06}).  The following lemma shows that this construction is above board, measure theoretically speaking.  We let $C_+[0,1]$ denote the set of continuous functions from $[0,1]$ to $[0,\infty)$ that map $0$ and $1$ to $0$. 

\begin{lemma}\label{lemma functional continuity} 
The map $F(f,\mu) = (T_f, d_f,\rho_f(0),\mu_f)$ is a continuous map from $C_+[0,1]\times \cP([0,1])$ equipped with the product of supremum topology with the topology of weak convergence to $\M$ with the Gromov-Hausdorff-Prokhorov topology.
\end{lemma}   

This lemma has been in the folklore for quite some time, with variants dating back to \cite[Theorem 23]{Aldo93}.  However, as far as we can tell, no formal statement or proof appears in the literature so we include one here.

\begin{proof}
Fix $(f,\mu), (g,\nu)\in C_+[0,1]\times \cP([0,1])$.  Define a relation $\cR$ on $T_f\times T_g$ by 
\[\cR = \{ (x,y) \in T_f\times T_g \ : \ \exists s\in [0,1] \textrm{ such that } \rho_f(s)=x \textrm{ and } \rho_g(s)=y\}.\] 
The distortion of $\cR$ is defined as
\[\textrm{dis}(\cR) = \sup\{ |d_f(x,x')-d_g(y,y')| : (x,y)\in \cR \ , \ (x',y')\in \cR\}.\]
We define a metric on the disjoint union $Z:=T_f\sqcup T_g$ of $T_f$ and $T_g$ by $d_Z(x,y)= d_h(x,y)$ if $x,y\in T_h$ for $h=f,g$, 
\[ d_Z(x,y) = \inf\left\{ d_f(x,x') +\frac{1}{2}\textrm{dis}(\cR) + d_g(y,y')  : (x',y')\in \cR\right\},\]
if $x\in T_f$ and $y\in T_g$, and extend by symmetry.  For the remained of the proof we identify $T_f$ and $T_g$ with their natural embeddings in $\Z$.  Observe that $d_H(T_f,T_g) \leq \textrm{dis}(\cR)/2 \leq 2||f-g||$ and $d_Z(\rho_f(0),\rho_g(0))=\textrm{dis}(\cR)/2$ since $(\rho_f(0),\rho_g(0))\in \cR$.  

To finish proving the continuity of $F$, it remains to show that the Prokhorov distance between $\mu_f$ and $\nu_g$ is can be made small if $(f,\mu)$ is sufficiently close to $(g,\nu)$.  For $h>0$, we define
\[ \omega(f,h) = \sup\{|f(x)-f(y)| : |x-y|<h\}\]
to be the $h$-modulus of continuity of $f$.  For any subset $B$ of a metric space $(E,\delta)$ and $\epsilon>0$, we define $B^\epsilon = \{x\in E : d(x,B)<\epsilon\}$.  The two key observations are that for $r>\textrm{dis}(\cR)/2$ and $I\subseteq [0,1]$ we have $\rho_g(I) \subseteq \rho_f(I)^{r}$ and if $\kappa,\epsilon_0>0$ we have $\rho_g(I^\kappa) \subseteq \rho_g(I)^{2\omega(g,\kappa)+\epsilon_0}$.  Combining these, we see that if $A\subset Z$,
\[\begin{split} \rho_f^{-1}(A)^\kappa \subseteq \rho_g^{-1}(\rho_g(\rho_f^{-1}(A)^\kappa)) & \subseteq \rho_g^{-1}(\rho_g(\rho_f^{-1}(A))^{2\omega(g,\kappa)+\epsilon_0})\\
&  \subseteq \rho_g^{-1}(\rho_f(\rho_f^{-1}(A))^{2\omega(g,\kappa)+r}) \\
&\subseteq \rho_g^{-1}(A^{2\omega(g,\kappa)+r}). \end{split}\]
Consequently, if $d_P(\mu,\nu)<\kappa$ and $A$ is measurable we have
\[ \mu_f(A) = \mu(\rho_f^{-1}(A)) \leq \nu(\rho_f^{-1}(A)^\kappa)+\kappa \leq \nu(\rho_g^{-1}(A^{2\omega(g,\kappa)+r})) + \kappa = \nu_g(A^{2\omega(g,\kappa)+r})+\kappa.\]
Similarly, $\nu_g(A) \leq \mu_f(A^{2\omega(f,\kappa)+r})+\kappa$.  Since $\omega(\cdot, h)$ is continuous on $C_+[0,1]$, it is easy to see from these inequalities that $d_P(\mu_f,\nu_g)$ can be made small by making $||f-g|| + d_P(\mu,\nu)$ small.
\end{proof}

\subsection{Excursions of random walks}
The basis of our approach to scaling limits of Galton-Watson trees with $n$ leaves is a new conditioned limit theorem for excursions of random walks.  Let $\mu$ be a probability distribution on $\{-1,0,1,2,\dots\}$ that has mean $0$ and finite, nonzero, variance $\sigma^2$.  Further let $\{X_i\}_{i\geq 1}$ be i.i.d. distributed like $\mu$.  We will restrict ourselves to the canonical situation where the $X_i$ are the coordinate functions on $\R^{\N}$ and $\N=\{1,2,\dots\}$.  For $\mathbf{x}\in \R^\N$ define $\tau_{-1}(\mathbf{x}) = \inf\{ i : x_i=-1\}$.  Let $S_0=0$ and $S_n= \sum_{i=1}^n X_i$ and let $N = \inf\{i : X_i=-1\} = \inf\{i: S_i-S_{i-1}=-1\}$.  Define $N^0=0$ and for $k\geq 1$ let $N^k = N^{k-1} + N \circ \theta_{N^{k-1}}$ where $\theta$ is the shift operator.  Observe that $(N^i-N^{i-1})_{i\geq 1}$ is an i.i.d. sequence with $\gamma:= \E N = 1/\mu(\{-1\})$.  Also note $N$ has geometric distribution with parameter $\mu(\{-1\})$, so that $\E[\exp(\lambda N)]<\infty$ for some $\lambda>0$.

For $\mathbf{x}\in \R^\N$, we let $x_0=0$ and define $x_n(t)$, for $0\leq t\leq 1$, as the $n$'th time scaled linearly interpolated process
\[ x_n(t) = x_{[nt]} + (nt-[nt])(x_{[nt]+1}-x_{[nt]}),\]
where $[t]$ is the integer part of $t$.

\begin{thm}\label{theorem passage bridge limit}
Assume that $\P(N^n = \tau_{-1}(\{S_i\}_{i\geq 0})) >0$ for all sufficiently large $n$ and recall that $\gamma := \E N$.  For $n\geq 1$, define the law $\P_n$ on $C[0,1]$ by
\[ \P_n(A) = \P\left(\frac{1}{\sigma\sqrt{\gamma n}}S_{N^n}(t) \in A \ \Big|  \tau_{-1}(\{ S_i\}_{i\geq 0})=N^n\right).\]
Let $\mathbb{W}^{\mathrm{ex}}$ be the law of standard (positive) Brownian excursion.  We then have $\P_n \Rightarrow \mathbb{W}^{\mathrm{ex}}$ as $n\to \infty$.  That is, $\P_n$ converges weakly to $\mathbb{W}^{\mathrm{ex}}$ in $C[0,1]$.
\end{thm}  

\begin{proof} Define $\tilde X_i = \sum_{N^{i-1}+1}^{N^i} X_i$.  Note that the $\tilde X_i$ are i.i.d. and, by Wald's equation, have mean $0$ and finite, nonzero variance $\sigma^2\E N$.  Let $\tilde S_0=0$ and $\tilde S_n = \sum_{i=1}^n \tilde X_i$.  Observe that 
\[\{N^n = \tau_{-1}(\{S_i\}_{i\geq 0})\} = \{n=\tau_{-1}(\{\tilde S_i\}_{i\geq 0})\}.\] 
A consequence of this is that 
\[\P(N^n = \tau_{-1}(\{S_i\}_{i\geq 0})) = \P(n=\tau_{-1}(\{\tilde S_i\}_{i\geq 0}))=O(n^{-3/2}),\]
with the last equality being a standard consequence of the Otter-Dwass formula and the local limit theorem (see e.g. \cite{Rizz11}).

We consider these processes as elements of $C[0,1]$ with the uniform topology.  Since $\E\exp(\lambda N)<\infty$ for some $\lambda >0$, Petrov's lemmas (as formulated in \cite[Appendix A.1]{MaMo03}) show that for each $\epsilon>0$ there exist constants $c_1,c_2>0$ such that 
\[\P\left(\LL\frac{1}{\gamma n}N^n(t)-t\RR >\epsilon\right) \leq c_1\exp\left(-c_2 n\right).\] 
It follows that
\[\P\left(\LL\frac{1}{N^n}N^n(t)-t\RR >\epsilon\right) \leq 2c_1\exp\left(-c_2 n\right).\]
Let $\Psi_n(t)= (N^n)^{-1}N^n(t)$.  Consequently we have that
\begin{equation}\label{equation time change}\P\left(\LL\Psi_n^{-1}(t)-t\RR >\epsilon\right) \leq  \P\left(\LL\Psi_n(t)-t\RR >\epsilon\right) \leq 2c_1\exp\left(-c_2 n\right).\end{equation}
Hence 
\[ \P\left(\LL\Psi_n^{-1}(t)-t\RR >\epsilon \ |\ \tau_{-1}(\{\tilde S_i\}_{i\geq 0})=n \right) = O(n^{3/2}\exp\left(-c_2 n\right)).\]
Using the standard (if seldom written, see e.g. \cite{Kers98}) fact that 
\[ \P\left(\frac{1}{\sigma \sqrt{\gamma n}} \tilde S_n(t) \in \cdot \ \Big| \ \tau_{-1}(\{\tilde S_i\}_{i\geq 0})=n\right) \Rightarrow \mathbb{W}^{\mathrm{ex}},\]
we thus have the joint convergence 
\[\P\left[ \left(\frac{1}{\sigma \sqrt{\gamma n}} \tilde S_n(t), \Psi_n^{-1}\right)  \in \cdot \ \Big| \ \tau_{-1}(\{\tilde S_i\}_{i\geq 0})=n\right] \Rightarrow \mathbb{W}^{\mathrm{ex}} \times \delta_{(t, \ 0\leq t\leq 1)}.\]
Since the composition map $(f,g)\mapsto f\circ g$ is a continuous map from $C([0,1],\R)\times C([0,1],[0,1])$ to $C([0,1],\R)$, it follows from the continuous mapping theorem that
\[ \P\left(\frac{1}{\sigma \sqrt{\gamma n}} \tilde S_n(\Psi_n^{-1}(t)) \in \cdot \ \Big| \ \tau_{-1}(\{\tilde S_i\}_{i\geq 0})=n\right) \Rightarrow \mathbb{W}^{\mathrm{ex}}.\]
Now, observe that for $0\leq  k \leq n$ we have 
\[ \tilde S_n(\Psi_n^{-1}(N^k/N^n)) =  \tilde S_n(k/n) = \tilde S_k = S_{N^k} = S_n(N^k/N^n).\]
Therefore, to deduce the convergence of the first passage bridges of $(\sigma\sqrt{\gamma n})^{-1}S_{N^n}$ from the convergence of the first passage bridges of $(\sigma\sqrt{\gamma n})^{-1}\tilde S_n\circ\Psi^{-1}$, we need only control how the processes differ in time intervals of the form $[N^k/N^n,N^{k+1}/N^n]$.  We will do this in terms of the modulus of continuity of $\tilde S_n$.
Since $\mu$ is supported on $\{-1,0,1,\dots\}$ and $N=\inf\{i : S_i-S_{i-1}=-1\}$, for all $1\leq k \leq n$ we have
\[ \max_{N^{k-1}<i\leq N^{k}} \left|\sum_{j=N^{k-1}+1}^{i} X_j\right| \leq \sum_{j=N^{k-1}+1}^{N^{k}} X_j + 1 = \tilde X_k +1.\]
Suppose $\delta>0$ and suppose that $n>\delta^{-1}$.  We then have that 
\[  \max_{1\leq k \leq n} |\tilde X_k| \leq \omega(\tilde S_n,\delta) ,\]
where we recall that
\[\omega(f,\delta) = \sup\{ |f(u)-f(v)| : |u-v|\leq \delta\}\]
is the $\delta$-modulus of continuity.  Consequently
\[ \LL S_n(t) - \tilde S_n(\Psi_n^{-1}(t))\RR \leq \omega(\tilde S_n,\delta) + 1.\]  
Let $\epsilon>0$ be given.  By the standard condition for tightness in $C[0,1]$ (see e.g. \cite[Theorem 8.2]{Bill99}), there exists $\delta>0$ such that
\[\limsup_{n\to\infty} \P\left(\omega\left(\frac{1}{\sigma \sqrt{\gamma n}}\tilde S_n, \delta\right) \geq \epsilon/2 \ \Big| \ \tau_{-1}(\{\tilde S_i\}_{i\geq 0})=n\right) <\epsilon.\]
It follows that
\[ \lim_{n\to\infty} \P\left(\LL \frac{1}{\sigma \sqrt{\gamma n}} \tilde S_n(\Psi_n^{-1}(t)) - \frac{1}{\sigma \sqrt{\gamma n}} S_n(t) \RR \geq \epsilon \ \Big| \ \tau_{-1}(\{ S_i\}_{i\geq 0})=N^n\right)  =0.\]
We conclude, using e.g. \cite[Theorem 4.1]{Bill99}, that $\P_n\Rightarrow \mathbb{W}^{\mathrm{ex}}$.
\end{proof}

\subsection{Limits of Galton-Watson trees} 
For $x\in \Z^\N= \Z^{\{1,2,3,\dots\}}$, let $\tau_{-1} = \inf\left\{ n : \sum_{i=1}^n x_n=-1\right\}$.  Let $\D$ be the set of sequences of first passage bridge increments in $\Z^\N$ that are bounded below by $-1$.  Formally,
\[\D = \left\{ x \in \Z^\N \ :   x_{i} \geq -1 \textrm{ for } i\geq 1,  \ x_i=0 \textrm{ for } i\geq \tau_{-1}(x), \ \tau_{-1}(x)<\infty\right\}.\]  
Suppose that $t\in \T^{(o)}$ has $n$ vertices.  The depth-first walk of $t$ is a function $f^t:\{0,\dots, 2n\}\to t$ defined by $f^t(0)$ is the root of $t$ and $f^t(i)$ is the smallest child of $f^t(i-1)$ that is not in $\{f^t(0),\dots, f^t(i-1)\}$ if one exists and the parent of $f^t(i-1)$ otherwise.  Index the vertices $V$ of $t$ from $1$ to $n$ by order of appearance on the depth-first walk of $t$, so that $V=\{v_1,\dots, v_n\}$.  Define $DQ^t=(DQ_k^t)_{k=1}^\infty$ by $DQ_k^t = \deg \ v_k-1$ for $k\leq n$ and $0$ for $k>n$, which are the increments of the depth-first queue of $t$.    Note that $DQ^t\in \D$.  Furthermore $t\mapsto DQ^t$ is a bijection from $\T^{(o)}$ to $\D$ (see e.g. \cite{Pitm06}).  $DQ^t$ is the list of increments of the depth-first queue of $t$, which is defined by 
\[Q_k^t = \sum_{i=1}^k DQ_i^t.\]
We will also be interested in several other processes associated to $t$.  Two of them are easily described in terms of the depth-first order of the vertices.  They are the contour and height processes of $t$ which are defined by
\[ C_k^t = d(\textrm{root}(t), f^t(k)) \quad \textrm{and} \quad H_k^t =  d(\textrm{root}(t), v_k)\]
respectively.  Two others are breadth-first processes.  The breadth-first order of the vertices of a tree $t$ with $n$ vertices is defined as follows: Let $n_k$ be the number of vertices of $t$ at distance $k$ from the root and let $ht(t)$ be the height of $t$.  We define $v_1'$ to be the root of $t$ and for $1\leq k \leq ht(t)$ we define $(v'_{n_{k-1}+1},\dots, v'_{n_{k}})$ to be the vertices of $t$ at height $k$ listed from left to right.  The complete list $(v'_1,\dots, v'_n)$ is the vertices of $t$ listed in breadth-first order.  The breadth-first queue is defined by 
\[ B^t_k = 1+\sum_{i=1}^k (\deg(v'_i)-1).\]
The level profile of $t$ is defined by $L^t_k=n_k$.  Observe that the level profile and breadth-first queue are related by 
\[L^t_k = B^t(1+n_1 +\cdots + n_{k-1}).\]
See \cite[p. 7]{Kers98} for the details regarding this relation.

Let $(\xi_i)_{i\geq 0}$ be a probability distribution on $\Z_+$ with mean $1$ and $0<\xi_0<1$.  Suppose that $T$ is a Galton-Watson tree with offspring distribution $\xi$.  Let $\mathbf{X}=(X_i)_{i\geq 1}$ be i.i.d. with distribution $\P(X_1=i)=\xi(\{i+1\})$.  From the fact that $t\mapsto DQ^t$ is a bijection, it follows straight forwardly that $(DQ^T_k,k=1,\dots, \#T) =_d (X_i, 1\leq i\leq \tau_{-1}(\{S_i\}_{i\geq 0}))$ and $\P(|T|= n) = \P(N^n= \tau_{-1}(\{S_i\}_{i\geq 0})) = O(n^{-3/2})$.  For $n$ such that $\P(|T|=n)>0$, let $T_n$ be distributed like $T$ conditioned to have $n$ leaves.  These observations lead us to the following proposition.

\begin{prop}\label{proposition random walk depth queue}
For $n$ such that $\P(| T|=n)>0$, we have
\[ (Q^{T_n}_k,0\leq k\leq \#T_n) =_d (S_0(\mathbf{X}),\dots, S_{\tau_{-1}}(\mathbf{X})) \big| (N^n= \tau_{-1}(\{S_i\}_{i\geq 1})) \]
\end{prop} 

Similar arguments show that

\begin{prop}
For $n$ such that $\P(|T|=n)>0$, we have
\[ (B^{T_n}_k,0\leq k\leq \#T_n) =_d (S_0(\mathbf{X})+1,\dots, S_{\tau_{-1}}(\mathbf{X})+1) \big| (N^n= \tau_{-1}(\{S_i\}_{i\geq 1})) \]
\end{prop}

Now, if we were to follow our previous notational convention, the time scaled linear interpolation of $Q^{T_n}$ would be denoted by $Q^{T_n}_{\#T_n}$, which seems congested.  We simplify this by dropping the superscript and the hash sign in the subscript.  That is, we use the notation, for $0\leq s\leq 1$,
\[Q_{T_n}(s) :=Q^{T_n}_{\#T_n}(s) =  Q^{T_n}_{[\#T_ns]} + (\#T_ns-[\#T_ns])(Q^{T_n}_{[\#T_ns]+1}-Q^{T_n}_{[\#T_ns]}).\]
We define $B_{T_n}$ and $H_{T_n}$ similarly, while we define $C_{T_n}$ with a slight modification by
\[C_{T_n}(s) :=C^{T_n}_{2\#T_n}(s) =  C^{T_n}_{[2\#T_ns]} + (2\#T_ns-[2\#T_ns])(C^{T_n}_{[2\#T_ns]+1}-C^{T_n}_{[2\#T_ns]}).\]
Restating Theorem \ref{theorem passage bridge limit} we obtain the following.

\begin{thm}\label{theorem depth-first queue}
Let $(\xi_i)_{i\geq 0}$ be a probability distribution on $\Z_+$ with mean $1$ and $0< \var(\xi):=\sigma^2<\infty$.  Suppose that $T$ is a Galton-Watson tree with offspring distribution $\xi$.  Assume that for all sufficiently large $n$ we have $\P(\# T= n)>0$.  For such $n$, let $T_n$ be distributed like $T$ conditioned to have $n$ leaves.  We then have the convergences in distribution
\[ \frac{\sqrt{\xi_0}}{\sigma \sqrt{n}} Q_{T_n} \Rightarrow e \quad \textrm{and}\quad  \frac{\sqrt{\xi_0}}{\sigma \sqrt{n}} B_{T_n} \Rightarrow e \]
in $C[0,1]$, where $e$ has distribution $\mathbb{W}^{\mathrm{ex}}$.
\end{thm}

Note that this says nothing about the convergence of the joint distribution of the scaled depth and breadth first queues, which we leave as an open problem.

Assuming $\xi$ has some exponential moments, we can strengthen this result.

\begin{thm}\label{theorem depth-first processes}
In addition to the conditions of Theorem \ref{theorem depth-first queue}, assume that $\int \exp(\alpha x) \xi(dx)<\infty$ for some $\alpha>0$.  Let $(e(t),0\leq t\leq 1)$ have distribution $\mathbb{W}^{ex}$.  We then have the convergence in distribution in $C([0,1]^3)$
\[ \left( \frac{\sqrt{\xi_0}}{\sigma \sqrt{n}} Q_{T_n}(s_1), \ \frac{\sqrt{\xi_0}}{\sigma \sqrt{n}} H_{T_n}(s_2), \frac{\sqrt{\xi_0}}{\sigma \sqrt{n}} C_{T_n}(s_3)\right)_{[0,1]^3} \Rightarrow \left(e(s_1), \ \frac{2}{\sigma^2} e(s_2), \ \frac{2}{\sigma^2}e(s_3)\right)_{[0,1]^3}.\]
\end{thm}

A similar result with weaker hypotheses appears in \cite[Theorem 5.9]{Kort12}, and the result for the scaled contour function can also be derived from \cite[Theorem 1]{Rizz11}.  However, for us, it follows immediately from Theorem \ref{theorem depth-first queue} and the next theorem.  By exploiting the existence of exponential moments in our setting, we are able to provide a much simpler proof than those appearing in \cite{Kort12,Rizz11}.  Because of this, Theorem \ref{theorem depth-first processes} can also be used to simplify the approach to non-crossing configurations of the plane in \cite{CuKo12}, which makes use of this theorem applied to the trees in Schr\"oder's second problem.

\begin{thm}\label{theorem exponential closeness}
For each $\nu>0$ there exist constants $N$ and $\alpha>0$ such that for $n\geq N$ we have
\begin{enumerate}
\item[(i)] $\displaystyle \P\left(\LL Q_{T_n} - \frac{\sigma^2}{2}H_{T_n} \RR \geq (\#T_n)^{1/4+\nu}\right)  \leq e^{-\alpha n^\nu}$ and
\item[(ii)] $\displaystyle \P\left(\LL Q_{T_n} - \frac{\sigma^2}{2}C_{T_n} \RR \geq (\#T_n)^{1/4+\nu}\right)  \leq e^{-\alpha n^\nu}$.
\end{enumerate}
\end{thm}

\begin{proof}
Theorems 2 and 3 in \cite{MaMo03} prove the analogous result when $T_n$ is distributed like $T$ conditioned to have $n$ vertices.  Our theorem follows from those by decomposition by the number of vertices in $T_n$.  For example, (ii) follows from Theorem 2 in \cite{MaMo03} by
\begin{eqnarray*} \lefteqn{\displaystyle \P\left(\LL Q_{T_n} - \frac{\sigma^2}{2}C_{T_n} \RR \geq (\#T_n)^{1/4+\nu}\right) }\\
 &&\displaystyle \qquad \leq  \frac{1}{\P(|T|=n)}\sum_{k\geq n \atop \P( \#T=k)>0} \P\left(\LL Q_{T} - \frac{\sigma^2}{2}C_{T} \RR \geq k^{1/4+\nu} \ \Big| \ \#T=k\right)\\
&&\displaystyle\qquad\leq  \ cn^{3/2}\sum_{k=n}^\infty e^{-\alpha' k^\nu} \\
&&\displaystyle\qquad \leq  \ e^{-\alpha n^\nu},
\end{eqnarray*}
where the second inequality and $\alpha'>0$ are given by Theorem 2 in \cite{MaMo03}.
\end{proof}

\begin{cor}\label{corollary level profile}
Maintaining the notation and hypotheses from Theorem \ref{theorem depth-first processes}, we have
\[ \left(\frac{\sqrt{\xi_0}}{\sigma\sqrt{n}} L^{T_n}_{\fl{\xi_0^{1/2}\#T_ns/\sigma n^{1/2}}} , \ s\geq 0\right) \rightarrow \left(\frac{1}{2}L_{s/2} , s\geq 0\right)\]
where $(L_s)_{s\geq 0}$ is the local time of standard Brownian excursion and the convergence is in distribution in the Skorokhod space $D[0,\infty)$.
\end{cor}

\begin{proof}
The proof of Theorem 1 in \cite{Kers98} goes through almost verbatim.  We need only verify a condition involving the cumulative height process
\[ \tilde H_{T_n}(u) =   \frac{\sqrt{\xi_0}}{\sigma\sqrt{n}}  \int_0^u L^{T_n}_{[\sigma^{-1}\sqrt{\xi_0n} s]}ds =\frac{\#T_n+1}{n} \int_0^1 \cf\left( \bar H_{T_n}(s) \leq [\sigma^{-1}\sqrt{\xi_0n} u]\right)ds,\]
where $\bar H_{T_n}(s) := H^{T_n}_{[(\#T_n+1) s]}$ for $0\leq s<1$.  What needs to be shown is that
\begin{equation}\label{equation k-condition} \lim_{\epsilon\downarrow 0} \limsup_{n\to\infty} \P\left(\tilde H_{T_n}(u) \leq \epsilon\right)=0\end{equation}
for every $u>0$.  To see this, note that the scaled convergence of $H_{T_n}$ to Brownian excursion, which is continuous, implies that $\bar H_{T_n}$ has the same scaling limit as $H_{T_n}$ in the Skorokhod space $D[0,1]$.  Since $\#T_n/n \to \xi_0^{-1}$, Equation \ref{equation k-condition} follows from standard continuity properties of the Skorokhod topology and the fact that
\[\mathbb{W}^{ex}\left( \int_0^1 \cf(e(s)\leq u)ds =0 \right)=0\]
for all $u>0$.   The proof of Theorem 1 in \cite[p. 18]{Kers98} now goes through exactly, using Theorem \ref{theorem depth-first queue}  in place of \cite[Theorem 11]{Kers98} and Equation \ref{equation k-condition} in place of \cite[Lemma 9]{Kers98}.
\end{proof} 

We now show how to obtain scaling limits for weighted trees from the limits for depth-first processes obtained in Theorem \ref{theorem depth-first processes}.

\begin{thm}\label{theorem weighted gw limits}
Let $\xi$ be an offspring distribution satisfying the hypothesis of Theorem \ref{theorem depth-first processes} and let $T_n$ be distributed like a Galton-Watson tree with offspring distribution $\xi$ conditioned to have $n$ leaves.  Consider $T_n$ as a rooted weighted metric space with the graph metric and the uniform probability measure $\mu^n$ on the leaves of $T_n$.  We have
\[ \frac{1}{\sqrt{n}}T_n \overset{d}{\to} \frac{2}{\sigma \sqrt{\xi_0}}T^{Br}\]
with respect to the Gromov-Hausdorff-Prokhorov topology. 
\end{thm}

\begin{proof}
Let $\nu^n$ be the empirical distribution of the location of leaves along the scaled contour process $n^{-1/2}C_{T_n}(s)$ of $T_n$.  It is clear that 
\[d_{GHP}\left(\left(\frac{1}{\sqrt{n}}T_n,\mu^n\right), \left(T_{\frac{1}{ \sqrt{n}} C_{T_n}}, \nu^n_{\frac{1}{ \sqrt{n}} C_{T_n}}\right) \right) \leq \frac{1}{\sqrt{n}}.\]
Therefore, by Theorem \ref{theorem depth-first processes} and Lemma \ref{lemma functional continuity}, all that remains to be shown is that $\nu^n \Rightarrow \lambda$ in probability, where $\lambda$ is Lebesgue measure on $[0,1]$.

Let $\hat\nu^n$ be the empirical distribution of the location of leaves along the height process of $T_n$, which we note is the same as the empirical distribution of the location of leaves along the depth-first queue of $T_n$.  We denote the vertices of $T_n$ listed in order of appearance on the depth-first walk of $T_n$ by $(v_1,\dots, v_{\#T_n})$.  For $1\leq l \leq \#T_n$, define $m(l) = \inf\{k : f^{T_n}(k)=v_l\}$, were we recall that $f^{T_n}$ is the depth-first walk of $T_n$.   From \cite[Lemma 2]{MaMo03}, we see that 
\[ m(l) = 2l-1- H^{T_n}_l,\]
where our formula is slightly  different from that in \cite{MaMo03} due to indexing considerations.  If we let $N^k$ denote the location of the $k$'th leaf along the depth-first queue of $T_n$, we see that 
\[\hat\nu^n = \frac{1}{n} \sum_{i=1}^n \delta_{\frac{N^i}{\#T_n}} \quad \textrm{and}\quad \nu^n = \frac{1}{n} \sum_{i=1}^n \delta_{\left(\frac{N^i}{\#T_n} - \frac{1+H^{T_n}_{N^i}}{2\#T_n}\right)}\]
It follows from Proposition \ref{proposition random walk depth queue} and Equation \ref{equation time change} that $\hat\nu^n \Rightarrow \lambda$ in probability.  Furthermore, it follows from Theorem \ref{theorem depth-first processes} that 
\[(\#T_n)^{-1}\sup_{1\leq l\leq \#T_n} H^{T_n}_l = O(1/\sqrt{n})\]
in probability.  As a result of this, we have that $\nu^n\Rightarrow\lambda$ in probability as well and this completes the proof.
\end{proof}

\section{Explicit computations using analytic combinatorics} \label{sec direct computations}
The convergence result above is a powerful theorem for obtaining asymptotics of various tree statistics, but it is difficult to prove and, as a result, asymptotics thus obtained can seem mysterious.  Consequently it is worth noting we can obtain a number of asymptotic results directly using analytic combinatorics.  This analytic approach is based on considering the asymptotics of generating functions.   The primary source for asymptotics in general is \cite{FlSe09}, which develops the theory with extensive examples.

Our main goal in this section is to develop the general framework of additive functionals for leaf-labeled trees whose size is counted by their number of leaves and use this to find the asymptotic distribution of the height of a uniformly randomly chosen leaf.  We also find the limit of the expected height of a random leaf.  These computations are meant to be illustrative and by no means exhaust the power of analytic combinatorics framework.  Indeed, it seems that most of the techniques used to study for simple varieties of trees (see \cite{FlSe09} for a summary of the extensive work in this area) have close analogs that will provide results about the trees we are considering here. 

\subsection{Analytic background}
In this subsection we recall from \cite{FlSe09} some fundamental results from analytic combinatorics.  The next subsection applies these to our setting.  The approach is based on the asymptotics of several universal functions.  Recall that if $f(z)$ is either a formal power series, $[z^n]f(z)$ denotes the coefficient of $z^n$.  Similarly, if $f:\C\to\C$ is analytic at $0$ then $[z^n]f(z)$ denotes the coefficient of $z^n$ in the power series expansion of $f$ at $0$. 

\begin{prop}
\label{prop basic functions}
Let $f(z)=(1-z)^{1/2}$, $g(z)=(1-z)^{-1/2}$, and $h(z)=(1-z)^{-1}$.   Then $[z^n]f(z) \sim  - 1/2\sqrt{\pi n^3}$, $[z^n]g(z)\sim 1/\sqrt{n\pi}$, and $[z^n]h(z)=1$.
\end{prop}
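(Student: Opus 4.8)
The plan is to recognize each of the three functions as a generalized binomial series and extract the coefficient exactly, then apply Stirling's formula. For $h(z) = (1-z)^{-1} = \sum_{n\geq 0} z^n$, there is nothing to do: $[z^n]h(z) = 1$. For the other two, I would use the general identity $(1-z)^{-\alpha} = \sum_{n\geq 0} \binom{n+\alpha-1}{n} z^n = \sum_{n\geq 0} \frac{\Gamma(n+\alpha)}{\Gamma(\alpha)\,n!} z^n$, valid for $\alpha \notin \{0,-1,-2,\dots\}$ as an analytic identity near $0$. Taking $\alpha = 1/2$ gives $[z^n]g(z) = \frac{\Gamma(n+1/2)}{\Gamma(1/2)\,n!}$, and taking $\alpha = -1/2$ gives $[z^n]f(z) = \frac{\Gamma(n-1/2)}{\Gamma(-1/2)\,n!}$.

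Next I would feed these exact expressions into Stirling's asymptotic $\Gamma(n+a)/\Gamma(n+b) \sim n^{a-b}$ as $n\to\infty$. For $g$: $\frac{\Gamma(n+1/2)}{n!} = \frac{\Gamma(n+1/2)}{\Gamma(n+1)} \sim n^{-1/2}$, and since $\Gamma(1/2) = \sqrt{\pi}$ we get $[z^n]g(z) \sim \frac{1}{\sqrt{\pi n}}$, as claimed. For $f$: $\frac{\Gamma(n-1/2)}{\Gamma(n+1)} \sim n^{-3/2}$, and using $\Gamma(-1/2) = -2\sqrt{\pi}$ we obtain $[z^n]f(z) \sim \frac{1}{-2\sqrt{\pi}}\, n^{-3/2} = -\frac{1}{2\sqrt{\pi n^3}}$, matching the statement. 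Alternatively, for $f$ one can avoid $\Gamma$ at a negative argument by writing $(1-z)^{1/2} = (1-z)(1-z)^{-1/2}$ and using the known coefficients of $(1-z)^{-1/2}$ together with a one-line telescoping/difference computation; I would mention this as the cleaner route if negative-argument Gamma values feel out of place.

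There is no real obstacle here — this is a standard computation — but the one point requiring a little care is the justification that the binomial-series identity for $(1-z)^{-\alpha}$ with non-integer $\alpha$ holds as an identity of analytic functions near $0$ (i.e., that the principal branch of $(1-z)^{-\alpha}$ has exactly this Taylor expansion with positive radius of convergence), and correctly tracking the sign and the value of $\Gamma(-1/2)$ in the case of $f$. Since the paper explicitly says it makes no attempt to prove these "fundamental results" and merely summarizes them, I expect the actual write-up to be just the two or three lines: invoke the generalized binomial theorem, then Stirling. If a fully self-contained argument is wanted, one can instead cite \cite{FlSe09}, where these are the archetypal "standard function scale" estimates.
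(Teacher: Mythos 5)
Your proof is correct. Note, however, that the paper supplies no proof of this proposition at all: in the paragraph preceding it the authors state explicitly that they ``summarize some of the fundamental results here, but make no attempt to prove them,'' treating these coefficient asymptotics as part of the standard toolkit of analytic combinatorics imported from \cite{FlSe09}. Your argument --- generalized binomial series $[z^n](1-z)^{-\alpha} = \Gamma(n+\alpha)/\bigl(\Gamma(\alpha)\,n!\bigr)$ followed by the Stirling-type estimate $\Gamma(n+a)/\Gamma(n+b)\sim n^{a-b}$, with the sign tracked via $\Gamma(-1/2)=-2\sqrt{\pi}$ --- is exactly the standard derivation one would find in that reference and correctly fills the gap the paper leaves. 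The alternative route you mention, writing $(1-z)^{1/2}=(1-z)(1-z)^{-1/2}$ and computing the resulting first difference of the coefficients of $(1-z)^{-1/2}$, is equally valid and avoids evaluating $\Gamma$ at a negative argument; either is fine.
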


To use these classical results we need a special type of analyticity called $\Delta$-analyticity, which we now define.

\begin{Def}[Definition VI.I p. 389 \cite{FlSe09}]
Given two number $\phi$ and $R$ with $R>1$ and $0<\phi<\pi/2$, the open domain $\Delta(\phi,R)$ is defined as
\[\Delta(\phi,R) = \{z\ | \ |z|<R, \ z\neq 1, \ |\arg(z-1)|>\phi\}.\]
For a complex number $\zeta$ a domain $D$ is a $\Delta$-domain at $\zeta$ if there exist $\phi$ and $R$ such that $D=\zeta \Delta(\phi,R)$.   A function is $\Delta$-analytic if it is analytic on a $\Delta$-domain.
\end{Def}  

Define $\lambda(z)$ by
\[  \lambda(z) =\frac{1}{z}\log\frac{1}{1-z}\]
and let $\mathcal{S} = \left\{(1-z)^{-\alpha}\lambda(z)^{\beta}   \ | \ \alpha,\beta\in\C\right\}$.

\begin{thm}[Theorem VI.4 p. 393 \cite{FlSe09}]\label{thm transfer}
Let $f(z)$ be a function analytic at $0$ with a singularity at $\zeta$, such that $f(z)$ can be continued to a domain of the form $\zeta\Delta_0$, for a $\Delta$-domain $\Delta_0$.   Assume that there exist two function $\sigma$ and $\tau$, where $\sigma$ is a (finite) linear combination of elements of $\mathcal{S}$ and $\tau\in \S$, so that 
\[ f(z) = \sigma(z/\zeta)+O(\tau(z/\zeta))\quad \mbox{as } z\to \zeta \ \mbox{ in }\ \zeta\Delta_0.\]
Then the coefficients of $f(z)$ satisfy the asymptotic estimate
\[ f_n = \zeta^{-n}\sigma_n + O(\zeta^{-n}\tau_n^\star),\]
where $\tau_n^\star = n^{a-1}(\log n)^b$, if $\tau(z)=(1-z)^{-a} \lambda(z)^b$.
\end{thm}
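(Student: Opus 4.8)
This is the classical singularity-analysis transfer theorem, and I would prove it by the standard route: reduce to two independent ingredients — precise coefficient asymptotics for the ``standard scale'' functions in $\mathcal{S}$, and a ``big-$O$'' transfer for error terms — and then combine them by linearity of coefficient extraction. First, a harmless normalization: since $[z^n]\,h(z/\zeta) = \zeta^{-n}[z^n]\,h(z)$ for any $h$ analytic at $0$, and both the finite linear combination $\sigma$ and the class $\mathcal{S}$ are stable under $z\mapsto z/\zeta$, we may assume $\zeta=1$, the factor $\zeta^{-n}$ being simply carried along. Writing $g(z)\deq f(z)-\sigma(z)$, the hypothesis says that $g$ is analytic in a $\Delta$-domain $\Delta_0$ and $g(z)=O(\tau(z))$ there as $z\to 1$. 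Since $f_n=\sigma_n+g_n$, it suffices to (a) compute $\sigma_n$, and (b) show $g_n=O(\tau_n^\star)$.

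For (a) I would extend Proposition \ref{prop basic functions} to all of $\mathcal{S}$ by a Hankel-contour computation. For $\sigma(z)=(1-z)^{-\alpha}\lambda(z)^{\beta}$ with $\lambda(z)=\tfrac1z\log\tfrac1{1-z}$, Cauchy's formula gives $\sigma_n=\frac{1}{2\pi i}\oint \sigma(z)\,z^{-n-1}\,dz$; deforming the contour to a Hankel-type contour that circles $z=1$ at distance of order $1/n$ and substituting $z=1+t/n$, the integral converges as $n\to\infty$ to a Hankel integral representing $1/\Gamma(\alpha)$, while the slowly varying factor $\lambda(z)^{\beta}$ contributes $(\log n)^{\beta}$ to leading order. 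This yields $\sigma_n\sim \frac{n^{\alpha-1}(\log n)^{\beta}}{\Gamma(\alpha)}$, with a full asymptotic expansion available if one is more careful, and the cases $\alpha=\pm\tfrac12,1$, $\beta=0$ recover Proposition \ref{prop basic functions}.

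For (b), the big-$O$ transfer, I would use Cauchy's formula for $g$, integrating over a contour $\gamma=\gamma_1\cup\gamma_2\cup\gamma_3\cup\gamma_4\subset\Delta_0$: an inner circle $\gamma_1$ of radius $1/n$ around $z=1$; two rectilinear segments $\gamma_2,\gamma_3$ leaving it at a fixed angle $\theta\in(\phi,\pi/2)$ to the real axis (respecting the constraint $|\arg(z-1)|>\phi$ that defines the $\Delta$-domain, while keeping $\cos\theta>0$ so that the modulus increases outward) out to modulus $1+\eta$ for a small fixed $\eta$; and an outer arc $\gamma_4$ of radius $1+\eta$. On $\gamma_4$, $|g|$ is bounded and $|z^{-n-1}|=O((1+\eta)^{-n})$, an exponentially small contribution. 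On $\gamma_1$, $|1-z|=1/n$ gives $|g(z)|=O(n^{a}(\log n)^{b})$, the arc has length $O(1/n)$, and $|z^{-n-1}|=O(1)$, contributing $O(n^{a-1}(\log n)^{b})$. On $\gamma_2,\gamma_3$, writing $1-z=-u e^{\pm i\theta}$ with $u\in[1/n,\eta]$, one has $|g(z)|=O(u^{-a}|\log u|^{b})$ and $|z|^{-n-1}\le e^{-cnu}$ for some $c>0$; the substitution $v=nu$ turns $\int_{1/n}^{\eta}u^{-a}|\log u|^{b}e^{-cnu}\,du$ into $n^{a-1}\int_{1}^{n\eta}v^{-a}|\log(v/n)|^{b}e^{-cv}\,dv=O(n^{a-1}(\log n)^{b})$. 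Summing the four contributions gives $g_n=O(n^{a-1}(\log n)^{b})=O(\tau_n^\star)$, and combining with (a) proves $f_n=\zeta^{-n}\sigma_n+O(\zeta^{-n}\tau_n^\star)$.

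The main obstacle is step (b): arranging the contour so that it both stays inside the $\Delta$-domain, where the hypothesis $g=O(\tau)$ holds uniformly, and still produces the decay $|z|^{-n}$ with exactly the right power of $n$, and then carrying the logarithmic factors $|\log u|^{b}$ through the estimates uniformly — in particular near the corners where $\gamma_1$ meets $\gamma_2,\gamma_3$ and over the range of small $v$. Step (a) is technically routine once the Hankel representation of $1/\Gamma$ is in hand, but one must check that $\lambda(z)^{\beta}$ contributes only the factor $(\log n)^{\beta}$ and leaves the leading constant $1/\Gamma(\alpha)$ undisturbed. (For complex exponents the error estimates are read with $a,b$ replaced by their real parts.)
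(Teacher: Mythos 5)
This statement is imported verbatim from Flajolet and Sedgewick and the paper offers no proof of it, so there is nothing in the paper to compare against; I will instead evaluate your sketch on its own terms and against the source.

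Your outline is the standard singularity-analysis argument and is correct in all essentials: reduce to $\zeta=1$ by rescaling, split off the ``principal part'' $\sigma$, and establish an $O$-transfer for the remainder $g=f-\sigma$ via Cauchy's formula over the Hankel-like contour (inner circle of radius $1/n$, two segments at angle $\theta\in(\phi,\pi/2)$ so that $|z|>1$ on them, outer arc of radius $1+\eta$). The estimates you give on each piece --- bounded $|g|$ and exponential decay on the outer arc, $O(n^{a}(\log n)^b)\cdot O(1/n)\cdot O(1)$ on the inner circle, and the substitution $v=nu$ converting the segment integrals to $O(n^{a-1}(\log n)^b)$ --- are exactly the ones in the cited proof, and you correctly flag the only delicacy, namely carrying the logarithmic factor uniformly through the segment estimate and at the contour corners. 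One small structural remark: step (a) is not actually needed to prove the theorem as stated. The conclusion keeps $\sigma_n=[z^n]\sigma(z)$ exact (multiplied by $\zeta^{-n}$), so the statement reduces entirely to the $O$-transfer for $g$; the Hankel computation of $\sigma_n\sim n^{\alpha-1}(\log n)^{\beta}/\Gamma(\alpha)$ is a separate (prior) result in Flajolet--Sedgewick that one invokes afterwards to make the estimate usable. Including it does no harm, but presenting (a) as a prerequisite slightly misstates the logical structure.
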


Occasionally we will also need to deal with derivatives and the next theorem shows us how this is done.

\begin{thm}[Theorem VI.8 p. 419 \cite{FlSe09}]
\label{thm singular differentiation}
Let $f(z)$ be $\Delta$-analytic with singular expansion near its singularity of the simple form
\[ f(z) = \sum_{j=0}^J c_j(1-z)^{a_j} + O((1-z)^A),\]
with $A,a_1,a_2,\dots \in \R$.  Then, for each integer $r>0$, the derivative $f^{(r)}(z)$ is $\Delta$-analytic.   The expansion of the derivative at its singularity is obtained through term by term differentiation:
\[\frac{d^r}{dz^r}f(z) = (-1)^r \sum_{j=0}^J c_j \frac{\Gamma(a_j+1)}{\Gamma(a_j+1-r)}(1-z)^{a_j-r} + O((1-z)^{A-r}).\]
\end{thm}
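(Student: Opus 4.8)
The singularity here sits at $1$, so $f$ is analytic on a $\Delta$-domain $\Delta(\phi,R)$ with $f(z)=\sum_{j=0}^J c_j(1-z)^{a_j}+O((1-z)^A)$ as $z\to 1$ there, the $O$ being understood uniformly on the domain (shrinking $R$ if need be). That $f^{(r)}$ is $\Delta$-analytic is immediate, since the derivative of a function analytic on an open set is analytic on the same set, so $f^{(r)}$ is already analytic on $\Delta(\phi,R)$. The content is the singular expansion, and by linearity I would split $f$ into its explicit singular part $P(z)=\sum_{j=0}^J c_j(1-z)^{a_j}$ and the remainder $g(z)=f(z)-P(z)=O((1-z)^A)$, handling the two separately.

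For $P$ the computation is exact: $\frac{d^r}{dz^r}(1-z)^{a}=(-1)^r\,a^{\underline{r}}\,(1-z)^{a-r}$, where $a^{\underline{r}}=a(a-1)\cdots(a-r+1)$ is the falling factorial. As an identity of meromorphic functions $a^{\underline{r}}=\Gamma(a+1)/\Gamma(a+1-r)$, the form stated in the theorem; this vanishes exactly when $a\in\{0,1,\dots,r-1\}$, consistently, since then $(1-z)^a$ is a polynomial of degree $<r$. Summing over $j$ reproduces the claimed main terms.

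The substantive point is to show that $g(z)=O((1-z)^A)$ on $\Delta(\phi,R)$ forces $g^{(r)}(z)=O((1-z)^{A-r})$ on a (possibly smaller) $\Delta$-domain. My plan is to use the Cauchy formula for derivatives,
\[g^{(r)}(z)=\frac{r!}{2\pi i}\oint_{\gamma_z}\frac{g(w)}{(w-z)^{r+1}}\,dw,\]
with $\gamma_z$ the positively oriented circle of radius $\lambda|1-z|$ about $z$, for a small fixed $\lambda\in(0,\tfrac{1}{2}]$. First I would establish a geometric lemma: there exist $\phi'\in(\phi,\pi/2)$, $R'\in(1,R)$ and $\lambda$ small enough that $\overline{D}(z,\lambda|1-z|)\subset\Delta(\phi,R)$ for every $z\in\Delta(\phi',R')$; here $\lambda<\sin(\phi'-\phi)$ controls how far $\arg(w-1)$ can move over the disc, while $|w|\le|z|+\lambda(|z|+1)<R$ (for $\lambda$ small relative to $R-R'$) and $|w-1|\ge(1-\lambda)|z-1|>0$ handle the remaining two conditions defining $\Delta(\phi,R)$. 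On $\gamma_z$ one has $|w-z|=\lambda|1-z|$ and $\tfrac{1}{2}|1-z|\le|1-w|\le\tfrac{3}{2}|1-z|$, so $|g(w)|\le C|1-w|^A\le C'|1-z|^A$ uniformly on $\gamma_z$. The length-times-supremum estimate then gives
\[|g^{(r)}(z)|\le\frac{r!}{2\pi}\cdot 2\pi\lambda|1-z|\cdot\frac{C'|1-z|^A}{(\lambda|1-z|)^{r+1}}=\frac{r!\,C'}{\lambda^r}\,|1-z|^{A-r},\]
valid on $\Delta(\phi',R')$, which is the desired remainder bound. Recombining with the exact computation for $P$ finishes the proof.

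I expect the geometric lemma to be the only step needing genuine care: it is where the hypothesis $\phi<\pi/2$ is used, since near the two bounding rays of $\Delta(\phi,R)$ the largest admissible radius shrinks in proportion to the distance from the singularity. Once it is in hand, the rest is routine bookkeeping with the Cauchy estimate, and the exact differentiation of the powers $(1-z)^{a_j}$ presents no difficulty.
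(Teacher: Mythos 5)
Your proof is correct; however, note that the paper does not prove this statement itself but imports it directly from Flajolet--Sedgewick (Theorem VI.8). Your argument --- isolating the explicit singular part, differentiating the powers $(1-z)^{a_j}$ exactly with the falling-factorial identity $\Gamma(a+1)/\Gamma(a+1-r)$, and then controlling the remainder via Cauchy's integral formula over circles of radius $\lambda|1-z|$ that stay inside a slightly smaller $\Delta$-domain --- is precisely the standard argument used in the cited source, and your geometric lemma (with the constraint $\lambda < \sin(\phi'-\phi)$ to control the swing of $\arg(w-1)$, together with the radial and boundary conditions) is the key step done correctly.
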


The generating functions we will work with fall into the smooth implicit-function schema, which provides a way to derive coefficient asymptotics from functional equations.

\begin{Def}[Definition VII.4 p. 467 \cite{FlSe09}] \label{def: smooth-implicit}
Let $y(z)$ be a function analytic at $0$, $y(z)=\sum_{n\geq 0} y_nz^n$, with $y_0=0$ and $y_n\geq 0$.   The function is said to belong to the smooth implicit-function schema if there exists a bivariate function $G(z,w)$ such that 
\[y(z)=G(z,y(z)),\]
where $G(z,w)$ satisfies the following conditions.
\begin{enumerate}
\item[(i)] $G(z,w)=\sum_{m,n\geq 0} g_{m,n}z^mw^n$ is analytic in a domain $|z|<R$ and $|w|<S$, for some $R,S>0$.
\item[(ii)] The coefficients of $G$ satisfy $g_{m,n}\geq 0$, $g_{0,0}=0$, $g_{0,1}\neq 1$, and $g_{m,n}>0$ for some $m$ and for some $n\geq 2$.
\item[(iii)] There exist two numbers $r$ and $s$ such that $0<r<R$ and $0<s<S$, satisfying the system of equations
\[ G(r,s)=s,\quad G_w(r,s)=1,\quad \mbox{with} \,\, r<R, \,\, s<S,\]
which is called the characteristic system.
\end{enumerate}
\end{Def}

\begin{Def}[Definition IV.5 p. 266 \cite{FlSe09}]
Consider the formal power series $f(z) = \sum f_nz^n$.   The series $f$ is said to admit \emph{span} d if for some $r$
\[ \{n: f_n\neq 0\}_{n=0}^\infty \subseteq r+d\Z_+.\]
The largest span is the \emph{period} of $f$.   If $f$ has period $1$, then $f$ is \emph{aperiodic}.
\end{Def}

With this definition, we get the following theorem.   It is worth noting that this result appears in several places in the literature.   We give the version that appears as Theorem VII.3 on page 468 of \cite{FlSe09}.   In that source it is footnoted that many statements occurring previously in the literature contained errors, so caution is advised.

\begin{thm}[Theorem VII.3 p. 468 \cite{FlSe09}]
\label{thm implicit-function}
Let $y(z)$ belong to the smooth implicit-function schema defined by $G(z,w)$, with $(r,s)$ the positive solution of the characteristic system.   Then $y(z)$ converges at $z=r$, where it has a square root singularity,
\[ y(z) \underset{z\to r}{=} s-\gamma\sqrt{1-z/r}+O(1-z/r),\quad \gamma \equiv \sqrt{\frac{2rG_z(r,s)}{G_{ww}(r,s)}},\]
the expansion being valid in a $\Delta$-domain.   If, in addition, $y(z)$ is aperiodic, then $r$ is the unique dominant singularity of $y$ and the coefficients satisfy
\[[z^n]y(z)\underset{n\to\infty}{=} \frac{\gamma}{2\sqrt{\pi n^3}}r^{-n}\left(1+O(n^{-1})\right).\]
\end{thm}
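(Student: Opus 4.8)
The plan is to follow the standard analysis of implicitly defined singularities. Write $\Phi(z,w) = G(z,w) - w$, so that $y$ is characterized by $\Phi(z,y(z)) = 0$ together with $y(0) = 0$. Since $\Phi_w(0,0) = g_{0,1} - 1 \neq 0$, the analytic implicit function theorem produces a unique solution $y(z)$ analytic near $z = 0$, and nonnegativity of the $g_{m,n}$ forces $y_n \geq 0$, while the presence of some $g_{m,n} > 0$ with $n \geq 2$ forces $y$ not to be a polynomial. First I would continue $y$ along the positive real axis: as long as $G_w(x,y(x)) < 1$ the implicit function theorem keeps the continuation analytic, and since all coefficients involved are nonnegative, both $x \mapsto y(x)$ and $x \mapsto G_w(x,y(x))$ are nondecreasing. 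Let $r$ denote the radius of convergence of $y$. The crucial global step is to show that the continuation breaks down exactly at the characteristic point: $\lim_{x \uparrow r} y(x) = s$ with $G_w(r,s) = 1$ and $r < R$, $s < S$, so that $(r,s)$ is the positive solution of the characteristic system and $G$ is still analytic there. This uses positivity together with clause (iii) of Definition \ref{def: smooth-implicit}, which guarantees that such a point exists before $y$ can escape the polydisc $\{\,|z| < R,\ |w| < S\,\}$.

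Second I would carry out the local analysis at $(r,s)$. Taylor expansion gives $\Phi(r,s) = 0$ and $\Phi_w(r,s) = G_w(r,s) - 1 = 0$, while $\Phi_z(r,s) = G_z(r,s) > 0$ and $\Phi_{ww}(r,s) = G_{ww}(r,s) > 0$, the latter because some $g_{m,n} > 0$ with $n \geq 2$, and positivity of $G_z(r,s)$ because $G$ genuinely depends on $z$. Thus $w \mapsto \Phi(r,w)$ has a zero of order exactly two at $w = s$, and the Weierstrass preparation theorem factors $\Phi(r+v, s+u) = \bigl(u^2 + c_1(v)\,u + c_0(v)\bigr) H(v,u)$ with $H$ analytic and nonvanishing near the origin and $c_0(0) = c_1(0) = 0$. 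Solving the quadratic, $u = \tfrac12\bigl(-c_1(v) \pm \sqrt{D(v)}\,\bigr)$ with $D(v) = c_1(v)^2 - 4 c_0(v)$; comparing leading terms in the expansion of $\Phi$ shows $D(v) = -(4\gamma^2/r)\,v + O(v^2)$ with $\gamma^2 = 2 r G_z(r,s)/G_{ww}(r,s)$, so $D$ has a simple zero at $v = 0$. Hence $\sqrt{D(v)}$ continues analytically off the ray $v \geq 0$, and choosing the branch with $u < 0$ for $v < 0$ (forced by $y(x) < s$ for $x < r$) gives
\[ y(z) = s - \gamma\sqrt{1 - z/r} + O(1 - z/r), \]
valid in a $\Delta$-domain at $r$.

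For the coefficient estimate, suppose $y$ is aperiodic. I would first check that $r$ is the only singularity on $|z| = r$: any other singular point $z_0$ with $|z_0| = r$ would necessarily satisfy $G_w(z_0, y(z_0)) = 1$ with $|y(z_0)| \leq y(r) = s < S$, but aperiodicity together with strict term-by-term domination yields $|G_w(z_0, y(z_0))| < G_w(r,s) = 1$, a contradiction. Therefore $y$ is $\Delta$-analytic with the single singular expansion displayed above, and Theorem \ref{thm transfer} applies directly: the constant term $s$ contributes nothing to $[z^n]y$ for $n \geq 1$, the term $-\gamma(1 - z/r)^{1/2}$ contributes $-\gamma\, r^{-n}[w^n](1-w)^{1/2} \sim \tfrac{\gamma}{2\sqrt{\pi n^3}}\, r^{-n}$ by Proposition \ref{prop basic functions}, and the $O(1 - z/r)$ remainder transfers to an error of order $r^{-n} n^{-2}$. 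This gives $[z^n] y(z) = \tfrac{\gamma}{2\sqrt{\pi n^3}}\, r^{-n}\bigl(1 + O(n^{-1})\bigr)$.

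I expect the main obstacle to be the global step in the first paragraph: proving that the real-axis continuation of $y$ first fails precisely at the characteristic point, with $r < R$ and $s < S$ so that the subsequent Weierstrass analysis takes place strictly inside the domain of analyticity of $G$. This is exactly where the positivity hypotheses (ii) and the existence clause (iii) are indispensable. By contrast, the local singularity analysis and the transfer step are comparatively routine, the only subtlety there being the verification that the discriminant $D(v)$ has a simple rather than higher-order zero, which is equivalent to $G_z(r,s) \neq 0$.
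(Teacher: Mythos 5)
The paper does not prove this statement; it is quoted verbatim as Theorem VII.3 from Flajolet and Sedgewick's \emph{Analytic Combinatorics} \cite{FlSe09}, which is cited for the proof. Your sketch faithfully reproduces the argument from that source: implicit-function-theorem solvability at the origin, continuation along the positive real axis to the characteristic point, a Weierstrass-preparation (quadratic) factorization of $G(z,w)-w$ producing the square-root branch with the correct $\gamma$ read off the discriminant, an aperiodicity argument isolating $r$ as the unique dominant singularity, and singularity transfer via Theorem~\ref{thm transfer} for the coefficient asymptotics. You also correctly flag the delicate part: establishing that the radius of convergence of $y$ equals $r$, that $y(r^-)=s$, and that $(r,s)$ lies strictly inside the polydisc where $G$ is analytic --- this is where the positivity hypotheses and clause (iii) of Definition~\ref{def: smooth-implicit} do the real work, and it is where the bulk of the textbook proof lives. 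One small point worth making explicit in the aperiodicity step: the strict inequality $|G_w(z_0,y(z_0))|<G_w(r,s)=1$ for $|z_0|=r$, $z_0\neq r$ rests on $|y(z_0)|<y(r)=s$, which uses aperiodicity of $y$ together with the continuity of $y$ up to the circle $|z|=r$ (guaranteed because $y(r)<\infty$); combined with monotonicity of $G_w$ this rules out any other singular point on the circle.
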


We will also need the following theorem.

\begin{thm}[(A special case of) Theorem IX.16 p. 709 \cite{FlSe09}]
\label{thm semi-large powers}
Let $H(z)$ be $\Delta$-continuable and of the form $H(z) = \sigma - h(1-z/\rho)^{1/2}+O(1-z/\rho)$ and let $k_n = x_nn^{1/2}$ for $x_n$ in any compact subinterval of $(0,\infty)$.   Then
\[[z^n]H(z)^{k_n}\sim \sigma^{k_n}\rho^{-n}  \frac{h k_n}{2\sigma \sqrt{\pi n^3}}  \exp\left(-\frac{h^2k_n^2}{4\sigma^2 n}\right).\]
\end{thm}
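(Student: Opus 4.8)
The statement is imported verbatim from \cite{FlSe09}, so I only outline the proof; it is a singularity-analysis argument combined with a saddle-point-type evaluation near the dominant singularity $\rho$, the feature of the ``semi-large powers'' regime being that $k_n$ grows only like $n^{1/2}$, so the interior saddle point that governs genuine large powers ($k_n\asymp n$) plays no role and the branch point of $H$ at $\rho$ dominates. Write $k_n = x_n n^{1/2}$ with $x_n$ in a fixed compact $[\epsilon,M]\subset(0,\infty)$, and set $c_n = (h/\sigma)k_n$, so that $b_n := c_n/n^{1/2} = (h/\sigma)x_n$ stays in a compact subinterval of $(0,\infty)$. From $H(z)=\sigma-h(1-z/\rho)^{1/2}+O(1-z/\rho)$ one has, in a $\Delta$-domain at $\rho$,
\[\log H(z) = \log\sigma - \frac{h}{\sigma}\left(1-\frac{z}{\rho}\right)^{1/2} + O\!\left(1-\frac{z}{\rho}\right),\]
\[H(z)^{k_n} = \sigma^{k_n}\exp\!\left(-c_n\left(1-\frac{z}{\rho}\right)^{1/2}\right)\left(1 + O\!\left(k_n\Big(1-\tfrac{z}{\rho}\Big)\right)\right),\]
the error being uniform since $c_n = O(n^{1/2})$.

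First I would localize at $\rho$. Writing $[z^n]H(z)^{k_n} = \frac{1}{2\pi i}\oint H(z)^{k_n}z^{-n-1}\,dz$ and using $\Delta$-continuability, I deform the contour to a Hankel-type contour $\mathcal{H}$ that hugs the half-line from $\rho$ outward; on the part of $\mathcal{H}$ at distance $\geq\eta>0$ from $\rho$ one has $|H(z)|\leq\sigma-\delta$ for some $\delta>0$ (by the positivity hypotheses on $H$ and the fact that $\rho$ is its dominant singularity), so that portion contributes $O((\sigma-\delta)^{k_n}\rho^{-n})$, which is $o(\sigma^{k_n}\rho^{-n}n^{-A})$ for every $A$ since $k_n\to\infty$; this is negligible against the claimed order $\sigma^{k_n}\rho^{-n}n^{-1}$.

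Next I would rescale. On the central part of $\mathcal{H}$ put $\omega = n(1-z/\rho)$, so $z^{-n-1} = \rho^{-n-1}(1-\omega/n)^{-n-1}$ tends to $\rho^{-n}e^{\omega}$, $(1-z/\rho)^{1/2} = (\omega/n)^{1/2}$, and the contour in the $\omega$-plane becomes the standard Hankel (Bromwich) contour around the branch cut on the negative reals. Using the approximation above (the factor $1+O(k_n\omega/n) = 1+O(b_n\omega/n^{1/2})\to1$ pointwise, with tails handled as below), this gives
\[[z^n]H(z)^{k_n}\ \sim\ \sigma^{k_n}\rho^{-n}\,\frac{1}{n}\cdot\frac{1}{2\pi i}\int_{\mathcal{H}}e^{\omega}\exp\!\left(-b_n\,\omega^{1/2}\right)d\omega .\]
The remaining integral is classical: $\frac{1}{2\pi i}\int_{\mathcal{H}}e^{\omega}e^{-b\omega^{1/2}}\,d\omega = \frac{b}{2\sqrt\pi}\,e^{-b^2/4}$ for $b>0$, i.e. $\mathcal{L}^{-1}[e^{-b\sqrt s}]$ evaluated at $1$ (the one-sided stable-$\tfrac12$ / heat-kernel density), which one checks directly by the substitution $u=\omega^{1/2}$ and completing the square. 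With $b=b_n=hk_n/(\sigma\sqrt n)$ this equals $\frac{hk_n}{2\sigma\sqrt{\pi n}}\exp(-h^2k_n^2/(4\sigma^2 n))$; multiplying by $\sigma^{k_n}\rho^{-n}/n$ yields exactly the stated estimate.

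The main obstacle is the uniform bookkeeping in the rescaling step: one must show that replacing $H(z)^{k_n}$ by $\sigma^{k_n}\exp(-c_n(1-z/\rho)^{1/2})$ costs only a multiplicative $1+o(1)$ after integration, uniformly for $x_n\in[\epsilon,M]$. This is done by splitting $\mathcal{H}$ into a core $|\omega|\leq\log^2 n$, where the pointwise bound $1+O(b_n\omega/n^{1/2})$ suffices, and tails $|\omega|>\log^2 n$, where one needs a crude but sufficient bound on $|H(z)^{k_n}|$: since $H$ comes from a smooth implicit-function schema its Taylor coefficients are nonnegative, so $|H(z)|\leq H(|z|)$, which on the tails gives the superpolynomial decay that makes them negligible. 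Everything else is the standard singularity-analysis transfer together with the elementary Laplace-transform identity above.
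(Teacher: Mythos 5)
The paper offers no proof of this statement; it is imported verbatim from Flajolet and Sedgewick (Theorem IX.16), so there is nothing in the paper to compare your argument against. Your sketch is a correct reconstruction of the standard semi-large-powers singularity analysis: deform Cauchy's integral to a Hankel contour at $\rho$, rescale by $\omega = n(1-z/\rho)$ so that $z^{-n-1}\,dz \sim -\rho^{-n}n^{-1}e^{\omega}\,d\omega$ and $c_n(1-z/\rho)^{1/2} = b_n\omega^{1/2}$ with $b_n = hk_n/(\sigma\sqrt n)$ bounded away from $0$ and $\infty$, and evaluate the resulting Hankel integral as the inverse Laplace transform $\frac{1}{2\pi i}\int_{\mathcal H}e^{\omega}e^{-b\omega^{1/2}}\,d\omega = \frac{b}{2\sqrt\pi}e^{-b^2/4}$ at $t=1$. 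The bookkeeping you give (the $\log H$ expansion, the $1+O(k_n(1-z/\rho))$ correction factor, the core/tail split at $|\omega|\asymp\log^2 n$) is exactly what is needed, and the algebra recovers the stated constant.

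One substantive point worth making explicit: as transcribed in the paper the theorem omits hypotheses that are present in the source and that your argument actually uses. You invoke ``$|H(z)|\le\sigma-\delta$ away from $\rho$'' and ``$|H(z)|\le H(|z|)$ on the tails,'' both of which require that $H$ have nonnegative Taylor coefficients and that $\rho$ be the unique dominant singularity (in \cite{FlSe09} this is packaged as $H$ belonging to a singularity-analysis class with $H(0)\neq 0$ and aperiodic). Without some such positivity condition the statement as written is false: $\Delta$-continuability with the stated local expansion at $\rho$ does not preclude $|H|$ exceeding $\sigma$ elsewhere on $|z|=\rho$, in which case the far part of the contour would not be negligible. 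So your proof is sound, but you should note that the positivity hypotheses must be restored to the theorem statement rather than deduced from the application.
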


\subsection{Restricting the generality}
So far we have been considering a very general situation.  In order to simplify the computations in the following section we will focus on a more restricted setting.  In particular, we will let $\zeta = (\zeta_i)_{i\geq 0}$ be a sequence of non-negative weights such that $\zeta_0=1$, $\zeta_1=0$, $\gcd\{k:\zeta_k\neq 0\}=1$, and
\[ G_\zeta(z) = \sum_{j=2}^\infty \zeta_j \frac{z^j}{j!}\]
is entire.  These conditions can be relaxed, but doing so makes the analysis more difficult.

\begin{prop}\label{prop: C-is-implicit}  With $\zeta$ as above, $C_\zeta$ (defined in Section \ref{subsec probs})  belongs to the smooth implicit-function schema with $G(z,w)=z+G_\zeta(w)$.  Furthermore, in the case where $\zeta$ corresponds to Schr\"oder's third problem $(r,s)=(1/2,1)$ and in the case of the fourth problem, we have $(r,s)=(2\log(2)-1,\log(2))$ .  Additionally 
\[ [z^n]C_\zeta(z) \sim \frac{\gamma}{2\sqrt{\pi n^3}}r^{-n},\quad \gamma = \sqrt{\frac{2r}{G_\zeta''(s)}}.\]
\end{prop}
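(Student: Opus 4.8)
The plan is to verify directly that the hypotheses of Definition~\ref{def: smooth-implicit} hold for $G(z,w) = z + G_\zeta(w)$ and then apply Theorem~\ref{thm implicit-function}. First I would check condition (i): since $G_\zeta$ is assumed entire, $G(z,w) = z + G_\zeta(w)$ is analytic on all of $\C^2$, so we may take $R=S=\infty$ (or any finite values we like). Condition (ii) requires $g_{m,n}\geq 0$ (immediate from $\zeta_j\geq 0$ and the single term $g_{1,0}=1$), $g_{0,0}=0$ (clear), $g_{0,1}\neq 1$ (here $g_{0,1} = \zeta_1/1! = 0 \neq 1$, using $\zeta_1=0$), and $g_{m,n}>0$ for some $m$ and some $n\geq 2$ (true because $\zeta_k\neq 0$ for some $k\geq 2$, since otherwise $\gcd\{k:\zeta_k\neq 0\}$ would not be $1$). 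Next I would verify the functional equation $C_\zeta(z) = G(z, C_\zeta(z))$, which is exactly \eqref{eq recursion} with $\zeta_0=1$, i.e. $C_\zeta(z) = z + G_\zeta(C_\zeta(z))$; one should also note $C_\zeta$ is analytic at $0$ with $C_\zeta(0)=0$ and nonnegative coefficients (nonnegativity of coefficients follows from the recursion since all $\zeta_j\geq 0$), so $C_\zeta$ is a legitimate candidate for the schema.

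The one genuinely substantive point is condition (iii): the existence of positive $r,s$ solving the characteristic system $G(r,s)=s$, $G_w(r,s)=1$, which here reads
\[ r + G_\zeta(s) = s, \qquad G_\zeta'(s) = 1. \]
Since $G_\zeta'$ is continuous, strictly increasing on $(0,\infty)$ (its coefficients $\zeta_j/(j-1)!$ for $j\geq 2$ are nonnegative, not all zero), with $G_\zeta'(0)=0$ and $G_\zeta'\to\infty$, there is a unique $s>0$ with $G_\zeta'(s)=1$; then set $r = s - G_\zeta(s)$, and one checks $r>0$ because $G_\zeta(s) = \int_0^s G_\zeta'(u)\,du < \int_0^s 1\, du = s$ by strict monotonicity of $G_\zeta'$ (so $G_\zeta'(u)<1$ for $u<s$). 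This is the step I expect to require the most care, but it is short. For the two specific cases I would simply substitute: for Schr\"oder's third problem $\zeta_0=\zeta_2=1$, all other $\zeta_i=0$, so $G_\zeta(z)=z^2/2$, $G_\zeta'(z)=z$, giving $s=1$ and $r = 1 - 1/2 = 1/2$; for the fourth problem $\zeta_j=1$ for all $j\neq 1$, so $G_\zeta(z) = e^z - 1 - z$, $G_\zeta'(z) = e^z - 1$, giving $s = \log 2$ and $r = \log 2 - (2 - 1 - \log 2)\cdot(-1)$... more carefully $r = s - G_\zeta(s) = \log 2 - (e^{\log 2} - 1 - \log 2) = \log 2 - (2 - 1 - \log 2) = 2\log 2 - 1$, matching the claim.

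Finally, for the coefficient asymptotics I would invoke the aperiodicity hypothesis: the condition $\gcd\{k : \zeta_k \neq 0\} = 1$ ensures $C_\zeta$ is aperiodic (its support generates all sufficiently large integers), so Theorem~\ref{thm implicit-function} applies in full and gives
\[ [z^n] C_\zeta(z) \underset{n\to\infty}{=} \frac{\gamma}{2\sqrt{\pi n^3}} r^{-n}(1 + O(n^{-1})), \qquad \gamma = \sqrt{\frac{2r\, G_z(r,s)}{G_{ww}(r,s)}}. \]
Here $G_z(r,s) = \partial_z(z + G_\zeta(w))\big|_{(r,s)} = 1$ and $G_{ww}(r,s) = G_\zeta''(s)$, so $\gamma = \sqrt{2r/G_\zeta''(s)}$, which is exactly the stated formula; dropping the $(1+O(n^{-1}))$ factor gives the displayed asymptotic equivalence. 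I should note $G_\zeta''(s) > 0$ (so $\gamma$ is well-defined and positive): since $G_\zeta'$ is strictly increasing and real-analytic with nonnegative Taylor coefficients and is not affine, $G_\zeta''(s)>0$ for $s>0$. That completes the proof.
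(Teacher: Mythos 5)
Your proposal is correct and follows essentially the same route as the paper's own proof: the key step in both is existence of the positive solution $(r,s)$ to the characteristic system, obtained via the intermediate value theorem for $G_\zeta'(s)=1$ and then $r = s - G_\zeta(s) > 0$ from strict monotonicity of $G_\zeta'$. You simply spell out more details that the paper leaves implicit (conditions (i) and (ii) of the schema, aperiodicity, and the explicit substitutions for problems three and four).
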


\begin{proof}
All that really needs to be checked is that the characteristic system has a positive solution.   For $G(z,w)=z+G_\zeta(w)$, the characteristic system is $s=r+G_\zeta(s)$ and $G_\zeta'(s)=1$.  Using that $G_\zeta$ is entire, $G_\zeta'(0)=0$ and $G_\zeta'(+\infty)=+\infty$, and $G_\zeta'$ is increasing on $\R_+$, the intermediate value theorem yields $s>0$.  An easy computation yields that $G_\zeta(s)<sG_\zeta'(s)=s$, so $r>0$ as well.
\end{proof}

\subsection{The height of a random leaf}\label{sec main}
Let $H_n$ be the height of a randomly chosen leaf from a tree in $\T^{(\ell)}_n$.   Specifically, to get $H_n$, we choose a tree $T_n$ from $\T^{(\ell)}_n$ according to $Q^{\zeta (\ell)}_n$ and then choose a leaf uniformly at random from $T_n$.  Our main result in this section is the following theorem.

\begin{thm}
\label{thm Rayleigh limit}
As $n\to\infty$
\[ \frac{\lambda}{\sqrt{n}}H_n \overset{d}{\to} \mbox{Rayleigh}(1),\]
for $\lambda = \sqrt{G_\zeta''(s)r}$.   In Schr\"oder's third problem $\lambda = 1/\sqrt{2}$, and in the fourth problem $\lambda = \sqrt{4\log(2)-2}$.
\end{thm}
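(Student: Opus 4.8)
The plan is to compute the limiting distribution of $H_n$ by an analytic-combinatorics argument based on a bivariate generating function that marks the height of a distinguished leaf. First I would set up the appropriate generating function. Let $C_\zeta(z)$ be as in Proposition \ref{prop: C-is-implicit}, and introduce a second variable $u$ tracking the depth of a marked leaf: define $C_\zeta(z,u)$ to be the generating function for trees in $\T$ with one leaf marked, where $u$ records the distance from the marked leaf to the root. The recursive structure of the trees gives a functional equation of the form $C_\zeta(z,u) = z + u\, G_\zeta'(C_\zeta(z))\, C_\zeta(z,u)$, since marking a leaf means descending from the root through one child (picking up a factor $u$ and a factor $G_\zeta'(C_\zeta(z))$ from the subtree containing the marked leaf, the remaining siblings being arbitrary) and recursing. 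Solving, $C_\zeta(z,u) = z/(1 - u\,G_\zeta'(C_\zeta(z)))$. The quantity $[z^n]\,\partial_u^{\,k} C_\zeta(z,u)\big|_{u=1}$ (up to combinatorial factors) is the total weight of pairs (tree with $n$ leaves, leaf at height $k$), so that $\P(H_n = k)$ is this divided by $n\,[z^n]C_\zeta(z)$, the total weight of (tree, leaf) pairs.

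Next I would extract the asymptotics. The probability generating function style object to analyze is $\sum_k \P(H_n=k) x^k$, which corresponds to $[z^n]\, x z/(1 - x\,G_\zeta'(C_\zeta(z)))$ normalized by $n[z^n]C_\zeta(z)$. Writing $\Phi(z) \deq G_\zeta'(C_\zeta(z))$, near the dominant singularity $z=r$ we have $C_\zeta(z) = s - \gamma\sqrt{1-z/r} + O(1-z/r)$ from Theorem \ref{thm implicit-function}, hence $\Phi(z) = G_\zeta'(s) - \gamma\,G_\zeta''(s)\sqrt{1-z/r} + O(1-z/r) = 1 - \gamma\,G_\zeta''(s)\sqrt{1-z/r} + O(1-z/r)$, using $G_\zeta'(s)=1$. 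So $1 - x\Phi(z) = (1-x) + x\gamma G_\zeta''(s)\sqrt{1-z/r} + \cdots$, and the generating function $z/(1-x\Phi(z))$ behaves like that of a quasi-power. Setting $x = x_n \to 1$ at the scale $x_n = e^{-t\lambda/\sqrt n}$ (equivalently studying the Laplace transform $\E[e^{-t\lambda H_n/\sqrt n}]$), one should find via a singularity analysis — this is where Theorem \ref{thm transfer} and the computation $\gamma = \sqrt{2r/G_\zeta''(s)}$ enter, giving $\gamma G_\zeta''(s) = \sqrt{2 r G_\zeta''(s)} = \sqrt 2\,\lambda$ with $\lambda = \sqrt{G_\zeta''(s) r}$ — that the transform converges to $\int_0^\infty e^{-tx} x e^{-x^2/2}\,dx$, the Laplace transform of the Rayleigh(1) distribution. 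The normalization by $n[z^n]C_\zeta(z) \sim n\gamma/(2\sqrt{\pi n^3})\,r^{-n}$ is precisely what produces the correct scaling $\sqrt n$ and cancels the spurious constants.

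The main obstacle I expect is making the passage from the singular expansion of $z/(1 - x_n\Phi(z))$ to the asymptotics of its $n$-th coefficient \emph{uniform} in $x_n$ along the sequence $x_n \to 1$; a single application of the transfer theorem for fixed $x$ is not enough, and one must control error terms uniformly so that the limit of the normalized coefficients is the Rayleigh Laplace transform. This is exactly the kind of situation handled by Theorem \ref{thm semi-large powers} (semi-large powers): rewriting $z/(1-x\Phi(z))$ so that the relevant factor is a power $H(z)^{k}$ with $k \sim x_n n^{1/2}$ is the cleaner route, since that theorem already packages the uniform estimate and directly produces the Gaussian-type exponential $\exp(-h^2 k_n^2/(4\sigma^2 n))$ whose shape, after summing over $k$ or reading off the transform, is Rayleigh. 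Once the transform limit is established for all $t \ge 0$, a standard continuity theorem for Laplace transforms of nonnegative random variables gives convergence in distribution. The final bookkeeping — substituting $G_\zeta''(s) = 1$, $(r,s)=(1/2,1)$ for the third problem and $G_\zeta''(s) = \log 2$, $(r,s) = (2\log 2 - 1, \log 2)$ for the fourth (from Proposition \ref{prop: C-is-implicit}) — yields $\lambda = 1/\sqrt 2$ and $\lambda = \sqrt{4\log(2)-2}$ respectively, as claimed.
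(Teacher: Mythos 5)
Your plan is essentially the same as the paper's proof: both rest on the identity $\Xi_k(z)=z\,G_\zeta'(C_\zeta(z))^k$ for the generating function of leaves at height $k$ (you derive it via a bivariate marking variable $u$, the paper via its additive-functional Lemma \ref{le additive functions}, but the formula and its role are identical), both apply the semi-large powers theorem (Theorem \ref{thm semi-large powers}) to $G_\zeta'(C_\zeta(z))^{k_n}$ with $k_n\sim x\sqrt n$, and both normalize by $n[z^n]C_\zeta(z)$. The only divergence is cosmetic and in the very last step: you pass to the Rayleigh limit via Laplace-transform continuity, while the paper reads off the local limit directly and invokes the Scheff\'e-type corollary; both are routine once the semi-large-powers estimate is in hand.
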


We remark that this theorem can be derived as a consequence of Theorem \ref{theorem weighted gw limits} and Propositions \ref{proposition gw to Q} and \ref{prop leaf-labeling}.  In fact, those results allow us to derive asymptotic results for the joint law of the heights of any fixed number of random leaves.  The analytic approach we take now has the advantage of requiring less background than those results and also gives an independent verification of the scaling constants obtained in those results.    

Our approach will be that of additive functionals, whose theory we now develop.  We parallel the development of these functions in \cite{FlSe09}, p. 457.   Their work was done for simple varieties of trees whose size was determined by the number of vertices.   Here we work with trees whose size is determined by the number of leaves.

For a rooted unordered tree $t$ whose leaves are labeled by $B\subseteq \N$, let $\tilde t \in \T^{(\ell)}$ be the tree that results from relabeling the leaves of $t$ by the unique increasing bijection from $B$ to $\{1,2,\dots,|B|\}$ (where $|B|$ is the cardinality of $B$).  Suppose we have functions $\xi,\theta,\psi : \T^{(\ell)} \to \R$ satisfying the relation
\[ \xi(t) = \theta(t)+\sum_{j=1}^{\deg(t)} \psi(\tilde t_j),\]
where $\deg(t)$ is the root degree of $t$ and the $\{t_j\}$ are the root subtrees of $t$ ordered in increasing order of the leaf with the smallest label.  Letting $\bullet$ denote the tree on one leaf, we note that $\deg(\bullet)=0$, so in particular $\xi(\bullet)=\theta(\bullet)$.   Define the exponential generating functions
\[ \Xi(z) = \sum_{t\in\T^{(\ell)}} \xi(t)w(t) \frac{z^{|t|}}{|t|!}, \quad \Theta(z) = \sum_{t\in\T^{(\ell)}} \theta(t)w(t) \frac{z^{|t|}}{|t|!}, \quad \mbox{and }\quad \Psi(z) = \sum_{t\in\T^{(\ell)}} \psi(t)w(t) \frac{z^{|t|}}{|t|!}.\]
Our results make use of the following lemma, which is a relation of formal power series.

\begin{lemma}
\label{le additive functions}
Let $C_\zeta(z)$ be the exponential generating function for $\mathcal{C}$.   Then
\begin{equation}
\label{eq additive}
\Xi(z) = \Theta(z) + G_\zeta'(C_\zeta(z))\Psi(z).
\end{equation}
In the purely recursive case where $\xi\equiv \psi$ we have
\begin{equation}
\label{eq recursive additive}
\Xi(z) = \frac{\Theta(z)}{1-G_\zeta'(C_\zeta(z))} = C'_\zeta(z)\Theta(z).
\end{equation}
\end{lemma}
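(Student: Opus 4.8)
The plan is to prove \eqref{eq additive} by a direct manipulation of exponential generating functions built from the defining recursion $\xi(t) = \theta(t) + \sum_{j=1}^{\deg(t)} \psi(\tilde t_j)$, and then to derive \eqref{eq recursive additive} by specializing to $\xi \equiv \psi$ and using the functional equation for $C_\zeta$. First I would rewrite $\Xi(z) - \Theta(z) = \sum_{t \in \T} \big(\sum_{j=1}^{\deg(t)} \psi(\tilde t_j)\big) w(t) \frac{z^{|t|}}{|t|!}$. The key structural fact is that a tree $t \in \T$ with $\deg(t) = k \geq 2$ is determined by an unordered collection of its $k$ root subtrees together with the partition of its leaf-label set $\{1,\dots,|t|\}$ into the label sets of these subtrees; forgetting labels, $w(t) = \zeta_k \prod_{j=1}^k w(\tilde t_j)$, and the leaf-label structure contributes the multinomial coefficient counting how many ways to distribute labels. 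I would organize the sum by first summing over $k \geq 2$, then over ordered tuples $(s_1,\dots,s_k) \in \T^k$ of root subtrees (dividing by $k!$ to account for the unordered collection, as in the $\frac{\alpha_k}{k!}$ device used in \eqref{eq recursion} and the proof of the Gibbs theorem), then distributing labels. Because $\sum_{j=1}^{k}\psi(\tilde s_j)$ is symmetric in the $s_j$, each of the $k$ terms contributes equally, yielding a factor $k$; this is exactly the combinatorial mechanism that produces the derivative $G_\zeta'$.

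Concretely, writing $\ell_j = |s_j|$ and $n = \ell_1 + \cdots + \ell_k$, the label-distribution step contributes $\binom{n}{\ell_1,\dots,\ell_k}$, so that the $z^{|t|}/|t|!$ weighting turns the product of subtree generating functions into an honest product: $\sum_{n} \binom{n}{\ell_1,\dots,\ell_k} \big(\prod_j w(\tilde s_j)\big) \frac{z^n}{n!} = \prod_j \big(w(\tilde s_j)\frac{z^{\ell_j}}{\ell_j!}\big)$ after summing over all $s_j$. After using symmetry to replace $\sum_{j}\psi(\tilde s_j)$ by $k\,\psi(\tilde s_k)$ say, one factor becomes $\Psi(z)$ and the remaining $k-1$ factors each become $C_\zeta(z)$; collecting the $k$ and the $1/k!$ gives $\frac{k}{k!}\zeta_k\, C_\zeta(z)^{k-1}\Psi(z) = \frac{\zeta_k}{(k-1)!}C_\zeta(z)^{k-1}\Psi(z)$. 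Summing over $k \geq 2$ (the $k=1$ term vanishes since $\zeta_1 = 0$, and $k=0$ contributes nothing to this difference) gives exactly $\big(\sum_{k\geq 1}\frac{\zeta_k}{(k-1)!}C_\zeta(z)^{k-1}\big)\Psi(z) = G_\zeta'(C_\zeta(z))\Psi(z)$, which is \eqref{eq additive}. I should be careful that all of this is stated as an identity of formal power series, so no convergence issues arise and the rearrangements are legitimate term by term.

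For \eqref{eq recursive additive}: setting $\xi \equiv \psi$ forces $\Xi \equiv \Psi$, so \eqref{eq additive} reads $\Xi(z) = \Theta(z) + G_\zeta'(C_\zeta(z))\Xi(z)$, whence $\Xi(z) = \Theta(z)/(1 - G_\zeta'(C_\zeta(z)))$, provided $1 - G_\zeta'(C_\zeta(z))$ is invertible as a formal power series, which holds because $G_\zeta'(C_\zeta(0)) = G_\zeta'(0) = 0$ since $\zeta_1 = 0$. Finally, differentiating the functional equation \eqref{eq recursion}, $C_\zeta(z) = \zeta_0 z + G_\zeta(C_\zeta(z))$ with $\zeta_0 = 1$, gives $C_\zeta'(z) = 1 + G_\zeta'(C_\zeta(z))C_\zeta'(z)$, hence $C_\zeta'(z) = 1/(1 - G_\zeta'(C_\zeta(z)))$, which identifies the prefactor and yields $\Xi(z) = C_\zeta'(z)\Theta(z)$. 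The main obstacle is purely bookkeeping: getting the interplay of the $1/k!$ (unordered children), the multinomial coefficient (leaf labels), and the symmetry factor $k$ exactly right so that the product of three separate sums genuinely collapses to $G_\zeta'(C_\zeta(z))\Psi(z)$; everything else is formal algebra on power series. I would double-check the $k=1$ and singleton-tree ($\bullet$) edge cases explicitly, noting $\deg(\bullet) = 0$ so $\bullet$ contributes $\xi(\bullet)w(\bullet)z = \theta(\bullet)z$ consistently to both sides.
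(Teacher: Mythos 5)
Your proposal is correct and follows essentially the same approach as the paper: decompose by root degree, pass from unordered to ordered root subtrees with the $1/k!$ device, absorb the label-distribution multinomial coefficient into the exponential weights, use symmetry of $\sum_j \psi(\tilde s_j)$ to produce the factor $k$ that turns $\zeta_k/k!$ into $\zeta_k/(k-1)!$, and recognize the resulting series as $G_\zeta'(C_\zeta(z))\Psi(z)$; the recursive case is then handled by solving for $\Xi$ and differentiating the functional equation \eqref{eq recursion}. Your added remarks about invertibility of $1 - G_\zeta'(C_\zeta(z))$ as a formal power series and the $\bullet$ edge case are correct points of care that the paper leaves implicit.
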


\begin{proof}
We clearly have
\[\Xi(z) = \Theta(z)+\tilde\Psi(z),\quad \mbox{where} \quad \tilde\Psi(z) = \sum_{t\in\T^{(\ell)}}\left( w(t)\frac{z^{|t|}}{|t|!}\sum_{j=1}^{\deg(t)} \psi(\tilde t_j)\right).\]
Decomposing by root degree and using that $\xi(\bullet)=\theta(\bullet)$, we have
\[\begin{split} \tilde\Psi(z) 
& = \sum_{r\geq 1} \sum_{\deg(t)=r} \zeta_r \prod_{j=1}^r w(\tilde t_j) \frac{z^{|t_1|+\cdots+|t_r|}}{(|t_1|+\cdots+|t_r|)!} (\psi(\tilde t_1)+\cdots+\psi(\tilde t_r)) \\
& = \sum_{r\geq 1} \left[\frac{ \zeta_r}{r!} \sum_{(t_1,\dots, t_r)\in (\T^{(\ell)})^r} \prod_{j=1}^r w(t_j) {|t_1|+\cdots+|t_r| \choose |t_1|,\dots, |t_r|}\frac{z^{|t_1|+\cdots+|t_r|}}{(|t_1|+\cdots+|t_r|)!} (\psi(t_1)+\cdots+\psi(t_r))\right] \\
& = \sum_{r\geq 1} \left[\frac{\zeta_r}{r!} \sum_{(t_1,\dots, t_r)\in (\T^{(\ell)})^r} \prod_{j=1}^r w(t_j)\frac{z^{|t_1|+\cdots+|t_r|}}{|t_1|!\cdots |t_r|!}(\psi(t_1)+\cdots+\psi(t_r))\right] \\
& = \sum_{r\geq 1}  \frac{ \zeta_r}{(r-1)!} C_\zeta(z)^{r-1}\Psi(z) \\
& = G_\zeta'(C_\zeta(z))\Psi(z).
\end{split}\]
This yields \eqref{eq additive}.   In the recursive case, we have $\Xi(z)=\Theta(z)+G_\zeta'(C_\zeta(z))\Xi(z)$.   Solving for $\Xi(z)$ gives the first equality in \eqref{eq recursive additive}.   To get the second, we differentiate \eqref{eq recursion} to get $C'_\zeta(z)=1+G_\zeta'(C_\zeta(z))C'_\zeta(z)$.   Solving for $C'_\zeta(z)$ gives $C'_\zeta(z)=1/(1-G_\zeta'(C_\zeta(z)))$, from which the second equality in \eqref{eq recursive additive} is immediate.
\end{proof}

Two immediate applications are to counting the weighted numbers of leaves and vertices of a given height.  

\begin{thm} As $n\to \infty$ the expected number of leaves at height $k$ converges to $G_\zeta''(s)rk$ and the expected number of nodes at height $k$ converges to $sG_\zeta''(s)k+1$.
\end{thm}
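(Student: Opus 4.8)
The plan is to apply Lemma~\ref{le additive functions} twice, once to the functional that counts leaves at a given height and once to the functional that counts all nodes at a given height, and then extract asymptotics via the transfer theorems of the previous subsection. To set up the leaf count, fix $k\geq 0$ and let $\xi_k(t)$ be the (weighted) number of leaves of $t$ at height exactly $k$. This satisfies a near-recursive relation: $\xi_k(t) = \theta_k(t) + \sum_{j=1}^{\deg(t)}\xi_{k-1}(\tilde t_j)$ where $\theta_0(\bullet)=1$ and $\theta_k\equiv 0$ otherwise (so the ``leaf at height $0$'' contribution only shows up at the root when the whole tree is a single leaf, and descending one level in a subtree decreases the target height by one). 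Writing $\Xi_k$ for the corresponding exponential generating function, Lemma~\ref{le additive functions} gives the recursion $\Xi_k(z) = G_\zeta'(C_\zeta(z))\,\Xi_{k-1}(z)$ for $k\geq 1$ with $\Xi_0(z) = z$ (the EGF of $\theta_0$, since the only tree with a leaf at height $0$ is $\bullet$, of weight $\zeta_0=1$). Hence $\Xi_k(z) = z\,G_\zeta'(C_\zeta(z))^k$. The analogous computation for the node count uses $\Xi^{node}_0(z) = C_\zeta(z)$ (every tree contributes its root, so $\theta^{node}_0(t)=1$ for all $t$), giving $\Xi^{node}_k(z) = C_\zeta(z)\,G_\zeta'(C_\zeta(z))^k$.

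Next I would pass to expectations. The expected number of leaves at height $k$ in a tree of size $n$ chosen from $Q^\zeta_n$ is $\frac{n!\,[z^n]\Xi_k(z)}{n!\,[z^n]C_\zeta(z)} = \frac{[z^n]\Xi_k(z)}{[z^n]C_\zeta(z)}$, and similarly for nodes with $\Xi^{node}_k$ in the numerator. So everything reduces to understanding $[z^n]\big(z\,G_\zeta'(C_\zeta(z))^k\big)$ and $[z^n]\big(C_\zeta(z)\,G_\zeta'(C_\zeta(z))^k\big)$ relative to $[z^n]C_\zeta(z)$, for fixed $k$ as $n\to\infty$. By Proposition~\ref{prop: C-is-implicit}, $C_\zeta$ has a square-root singularity at $z=r$ with $C_\zeta(r)=s$ and $G_\zeta'(s)=1$; so near $z=r$ one has the singular expansion $C_\zeta(z) = s - \gamma\sqrt{1-z/r} + O(1-z/r)$ with $\gamma=\sqrt{2r/G_\zeta''(s)}$, and composing with $G_\zeta'$ (which is analytic and whose derivative at $s$ is $G_\zeta''(s)$) gives $G_\zeta'(C_\zeta(z)) = 1 - G_\zeta''(s)\,\gamma\sqrt{1-z/r} + O(1-z/r)$. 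Raising to the fixed power $k$ yields $G_\zeta'(C_\zeta(z))^k = 1 - k\,G_\zeta''(s)\,\gamma\sqrt{1-z/r} + O(1-z/r)$, so the dominant singular term is $-k\,G_\zeta''(s)\,\gamma\,(1-z/r)^{1/2}$. Multiplying by the analytic prefactor $z$ (resp. by $C_\zeta(z)$, whose value at $r$ is $s$) only multiplies the singular coefficient by $r$ (resp. by $s$), up to lower-order terms. By the transfer theorem (Theorem~\ref{thm transfer}) together with Proposition~\ref{prop basic functions}, $[z^n](1-z/r)^{1/2} \sim -\frac{r^{-n}}{2\sqrt{\pi n^3}}$, and $[z^n]C_\zeta(z)\sim \frac{\gamma}{2\sqrt{\pi n^3}}r^{-n}$ as recorded in Proposition~\ref{prop: C-is-implicit}. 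Taking ratios, the $\gamma$'s, the $r^{-n}$'s, and the $\frac{1}{2\sqrt{\pi n^3}}$'s all cancel, leaving $\mathbb{E}[\#\text{leaves at height }k]\to r\,G_\zeta''(s)\,k$ and $\mathbb{E}[\#\text{nodes at height }k]\to s\,G_\zeta''(s)\,k$. The $+1$ in the node count comes from being careful at the $k=0$ term: at height $0$ there is always exactly one node (the root) but it is a leaf iff $n=1$, so $\Xi^{node}_0 = C_\zeta$ while $\Xi_0 = z$, and for $k\geq 1$ the $O(1-z/r)$ remainder in $C_\zeta(z)G_\zeta'(C_\zeta(z))^k$ versus $s\cdot G_\zeta'(C_\zeta(z))^k$ contributes a non-singular (hence asymptotically negligible after division) correction — but the constant term $s - \gamma\sqrt{1-z/r}+\cdots$ in the prefactor, when the $\sqrt{\cdot}$ in the prefactor meets the $\sqrt{\cdot}$ in $G_\zeta'(C_\zeta)^k$, produces an order-$(1-z/r)$ term; reconciling this bookkeeping to pin down exactly the ``$+1$'' is the one place requiring care. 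The cleaner route for the node count is to note that nodes at height $k$ split as leaves at height $k$ plus internal nodes at height $k$, or simply to observe directly that $\sum_k \#\{\text{nodes at height }k\} = \#t$ and $\sum_k \#\{\text{leaves at height }k\} = |t| = n$, and track the boundary term $k=0$ separately where the root is always present.

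The main obstacle I anticipate is not the asymptotic extraction — that is routine given Proposition~\ref{prop: C-is-implicit} and the transfer theorems — but rather setting up the additive-functional recursion with the correct boundary data so that the constants $r\,G_\zeta''(s)\,k$ and $s\,G_\zeta''(s)\,k+1$ come out exactly, in particular getting the ``$+1$'' in the node statement right and justifying that the sub-dominant $O(1-z/r)$ terms really are negligible after dividing by $[z^n]C_\zeta(z)\asymp n^{-3/2}r^{-n}$ (a term analytic at $r$ contributes exponentially smaller coefficients, and an $O(1-z/r)$ singular term contributes $O(n^{-3/2}r^{-n}\cdot n^{-1})$ by Theorem~\ref{thm transfer}, which is $o$ of the main term, so this is fine but should be stated). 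Once the generating-function identities $\Xi_k(z) = z\,G_\zeta'(C_\zeta(z))^k$ and $\Xi^{node}_k(z)=C_\zeta(z)\,G_\zeta'(C_\zeta(z))^k$ are in hand, the rest is a one-line singularity analysis.
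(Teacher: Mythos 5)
Your setup of the generating functions $\Xi_k(z) = z\,G_\zeta'(C_\zeta(z))^k$ and $\Lambda_k(z) = C_\zeta(z)\,G_\zeta'(C_\zeta(z))^k$ matches the paper exactly, and your singular expansion $G_\zeta'(C_\zeta(z))^k = 1 - k\,G_\zeta''(s)\gamma\sqrt{1-z/r}+O(1-z/r)$ and the coefficient extraction for the leaf count are correct and identical to the paper's argument. The gap is in the node count, specifically in the sentence ``Multiplying by the analytic prefactor $z$ (resp.\ by $C_\zeta(z)$, whose value at $r$ is $s$) only multiplies the singular coefficient by $r$ (resp.\ by $s$).'' Unlike $z$, the prefactor $C_\zeta(z)$ is \emph{not} analytic at $r$; its own square-root singular part contributes. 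Writing
\[
C_\zeta(z)\,G_\zeta'(C_\zeta(z))^k = \Bigl(s-\gamma\sqrt{1-z/r}+O(1-z/r)\Bigr)\Bigl(1-kG_\zeta''(s)\gamma\sqrt{1-z/r}+O(1-z/r)\Bigr),
\]
the $\sqrt{\,\cdot\,}$-order singular term collects \emph{two} contributions: $s\cdot\bigl(-kG_\zeta''(s)\gamma\sqrt{1-z/r}\bigr)$ from the constant in $C_\zeta$ against the singular part of the power, and $\bigl(-\gamma\sqrt{1-z/r}\bigr)\cdot 1$ from the singular part of $C_\zeta$ against the constant in the power. Together these give $-(sG_\zeta''(s)k+1)\gamma\sqrt{1-z/r}$, and the ratio to $[z^n]C_\zeta(z)$ then yields $sG_\zeta''(s)k+1$. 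The $+1$ therefore comes from the $(\sqrt{\,\cdot\,})\times(\text{constant})$ cross term, not from the $(\sqrt{\,\cdot\,})\times(\sqrt{\,\cdot\,})$ term you flag — the latter is genuinely $O(1-z/r)$ and negligible, as you suspected. So your identification of ``the one place requiring care'' is apt, but the bookkeeping you sketch there is chasing the wrong cross term; carrying the full singular expansion of $C_\zeta$ into the product, rather than replacing it by its limiting value $s$, is exactly what the paper does and is what resolves the $+1$.
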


We remark that this Theorem involves convergence of expectations and this is not a direct consequence of our results on convergence in distribution.

\begin{proof}
Let $\xi_k(t)$ be the number of leaves of height $k$ in $t$, so that $\xi_k(t)w(t)$ is the weighted number of leaves of height $k$.   Define $\Xi_k = \sum_t \xi_k(t)w(t) z^{|t|}/|t|!$.   For $k\geq 1$ we apply the lemma with $\xi=\xi_k$, $\theta=0$ and $\psi=\xi_{k-1}$ to obtain
\[\Xi_k(z) = G_\zeta'(C_\zeta(z)) \Xi_{k-1}(z),\]
which easily yields 
\[\Xi_k(z) = \left[G_\zeta'(C_\zeta(z))\right]^k\Xi_0(z)=z\left[G_\zeta'(C_\zeta(z))\right]^k.\]
Letting $\Lambda_k(z)$ be the generating function for the weighted number of vertices of height $k$ we similarly get
\[\Lambda_k(z) = \left[G_\zeta'(C_\zeta(z))\right]^k\Lambda_0(z)=C_\zeta(z)\left[G_\zeta'(C_\zeta(z))\right]^k.\]
Using these forms, we are able to compute asymptotics.   Expanding $G_\zeta'$ about $s$, we have that $G_\zeta'(z) = 1 + G_\zeta''(s)(z-s) + O((z-s)^2)$.   Plugging in the asymptotic expansion of $C_\zeta$ we get from Proposition \ref{prop: C-is-implicit} and Theorem \ref{thm implicit-function} and doing some algebra, we have
\begin{equation}\label{eq phi' of C}G_\zeta'(C_\zeta(z)) = 1-G_\zeta''(s)\gamma \sqrt{1-z/r} + O(1-z/r).\end{equation}
Hence, using that $(1-z)^k = 1-kz+O(z^2)$, we see that
\[ [G_\zeta'(C_\zeta(z))]^k  = 1-G_\zeta''(s)k\gamma\sqrt{1-z/r} + O(1-z/r).\]
Thus, using Theorem \ref{thm transfer}, we have 
\[ [z^n] \Xi_k(z) = [z^n]z\left[G_\zeta'(C_\zeta(z))\right]^k \sim \frac{G_\zeta''(s)k\gamma}{2r^{n-1}\sqrt{\pi n^3}},\]
and, similarly with a bit more algebra,
\[ [z^n] \Lambda_k(z) = [z^n]C_\zeta(z)\left[G_\zeta'(C_\zeta(z))\right]^k \sim \frac{\gamma (skG_\zeta''(s)+1)}{2r^n\sqrt{\pi n^3}}.\]
Using the result on p. 474 of \cite{FlSe09}, that $[z^n] C_\zeta(z) \sim \gamma / 2r^n\sqrt{\pi n^3}$, we find that 
\[ E_{\T^{(\ell)}_n} (\xi_k) = \frac{n![z^n] \Xi_k(z)}{n![z^n]C_\zeta(z)} \sim G_\zeta''(s) r k.\]
Letting $\zeta_k : \T^{(\ell)}\to \Z$ be the number of nodes of height $k$ in $t$, we have that 
\[E_{\T^{(\ell)}_n} (\zeta_k) =  \frac{n![z^n] \Lambda_k(z)}{n![z^n]C_\zeta(z)} \sim sG_\zeta''(s) k + 1.\]
\end{proof}

The proof of Theorem \ref{thm Rayleigh limit} is similar, but we make use of Theorem \ref{thm semi-large powers} for the asymptotics.

\begin{proof}[Proof of Theorem \ref{thm Rayleigh limit}]  Let $\{k_n\}$ be a sequence of integers varying such that $c k_n/n^{1/2} \to x\in (0,\infty)$ for some $c>0$.   By Theorem \ref{thm semi-large powers}  and equation \eqref{eq phi' of C} we see that
\begin{equation}\label{eq semi-large C} [z^n] [G_\zeta'(C_\zeta(z))]^{k_n} \sim r^{-n} \frac{G_\zeta''(s)\gamma }{2 \sqrt{\pi n^3}} k_n \exp\left(-\frac{G_\zeta''(s)^2\gamma^2k_n^2}{4 n}\right).\end{equation}
Therefore
\[ [z^n] \Xi_{k_n}(z) \sim r^{-(n-1)} \frac{G_\zeta''(s)\gamma }{2 \sqrt{\pi n^3}} k_{n-1} \exp\left(-\frac{G_\zeta''(s)^2\gamma^2k_{n-1}^2}{4 (n-1)}\right).\]
This yields
\[ \begin{split} E_{\T^{(\ell)}_n}(\xi_{k_n}) & \sim G_\zeta''(s)rk_{n-1}\exp\left(-\frac{G_\zeta''(s)^2\gamma^2k_{n-1}^2}{4 (n-1)}\right) \\
& =G_\zeta''(s)rk_{n-1}\exp\left(-\frac{G_\zeta''(s)rk_{n-1}^2}{2 (n-1)}\right)
.\end{split}\]
Note that $P(H_n=k) = E_{\T^{(\ell)}_n}(\xi_{k})/n$.      Observe that $\{k_n\}$ satisfies the hypotheses of the above theorem.   Consequently, we have that 
\[\begin{split} \frac{\sqrt{n}}{c} P_n\left(\frac{c}{\sqrt{n}} H_n = \frac{c}{\sqrt{n}}  k_n\right) & =  \frac{\sqrt{n}}{c} P(H_n=k_n)\\
& =  \frac{1}{c\sqrt{n}} E_{\T^{(\ell)}_n}(\xi_{k_n}) \\
& \sim \frac{1}{c^2} G_\zeta''(s)r \frac{ ck_{n-1}}{\sqrt{n}}\exp\left(-\frac{G_\zeta''(s)r}{2c^2} \frac{c^2k_{n-1}^2}{ (n-1)}\right) \\
& \to \frac{G_\zeta''(s) r}{c^2} x\exp\left(-\frac{G_\zeta''(s)r}{2c^2} x^2\right)
.\end{split}\]
The proof is finished by an application of a standard corollary of Scheff\'e's theorem (see Theorem 3.3 in \cite{Bill99} for an idea of the proof, just adapted for a distribution on $(0,\infty)$) and choosing $c=\sqrt{G_\zeta''(s)r}$.
\end{proof}

In addition to proving convergence in distribution we can prove convergence of the first moment.  We remark that this Theorem involves convergence of expectations and this is not a direct consequence of our results on convergence in distribution.

\begin{thm}\label{thm expected leaf height} $E_{\T^{(\ell)}_n}H_n \sim  \sqrt{\frac{\pi}{2rG_\zeta''(s)}}  n^{1/2}$.
\end{thm}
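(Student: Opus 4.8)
The plan is to compute, exactly, the exponential generating function of the \emph{total leaf depth} and then extract the asymptotics of $E_{\T_n}H_n$ by singularity analysis, in the same spirit as the two preceding theorems. Since $H_n$ is the height of a uniformly chosen leaf of a $P_n$-random tree in $\T_n$, we have $P(H_n=k)=E_{\T_n}(\xi_k)/n$ with $\xi_k$ the leaf-count-at-height-$k$ functional of the previous theorem, so, using the formula $E_{\T_n}(\xi_k)=n![z^n]\Xi_k(z)\big/n![z^n]C_\zeta(z)$ established there,
\[E_{\T_n}H_n=\frac1n\sum_{k\ge1}k\,E_{\T_n}(\xi_k)=\frac{[z^n]\big(\sum_{k\ge1}k\,\Xi_k(z)\big)}{n\,[z^n]C_\zeta(z)},\]
the interchange of $\sum_{k}k(\cdot)$ and $[z^n](\cdot)$ being harmless because only the finitely many terms with $k\le n-1$ affect $[z^n]\Xi_k$.

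Next I would put $\Xi(z):=\sum_{k\ge1}k\,\Xi_k(z)$ into closed form. Using $\Xi_k(z)=z\,[G_\zeta'(C_\zeta(z))]^k$ from the previous proof together with $\sum_{k\ge1}kx^k=x/(1-x)^2$, and the differentiated functional equation $1-G_\zeta'(C_\zeta(z))=1/C_\zeta'(z)$ recorded in the proof of Lemma~\ref{le additive functions}, one obtains
\[\Xi(z)=z\,\frac{G_\zeta'(C_\zeta(z))}{\big(1-G_\zeta'(C_\zeta(z))\big)^2}=z\,C_\zeta'(z)\big(C_\zeta'(z)-1\big).\]
(The same formula follows equally from the purely recursive case of Lemma~\ref{le additive functions} applied to the additive functional given by the sum of $\Ht(v)$ over all leaves $v$ of $t$, whose toll generating function is $\Theta(z)=zC_\zeta'(z)-z$.) This expression is manifestly $\Delta$-analytic, being a polynomial in $z$ and in $C_\zeta'$, which is $\Delta$-analytic.

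Then I would run singularity analysis on $\Xi$. Proposition~\ref{prop: C-is-implicit} and Theorem~\ref{thm implicit-function} give $C_\zeta(z)=s-\gamma\sqrt{1-z/r}+O(1-z/r)$ on a $\Delta$-domain, with $\gamma=\sqrt{2r/G_\zeta''(s)}$, and term-by-term singular differentiation (Theorem~\ref{thm singular differentiation}) yields $C_\zeta'(z)=\frac{\gamma}{2r}(1-z/r)^{-1/2}+O(1)$. Since the $zC_\zeta'(z)^2$ term dominates near $z=r$, this gives the singular expansion
\[\Xi(z)=\frac{\gamma^2}{4r}(1-z/r)^{-1}+O\big((1-z/r)^{-1/2}\big).\]
The transfer theorem (Theorem~\ref{thm transfer}) then gives $[z^n]\Xi(z)\sim\frac{\gamma^2}{4r}\,r^{-n}$, while $[z^n]C_\zeta(z)\sim\frac{\gamma}{2\sqrt{\pi n^3}}\,r^{-n}$ (aperiodicity holding because $\gcd\{k:\zeta_k\ne0\}=1$). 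Dividing and substituting $\gamma^2=2r/G_\zeta''(s)$,
\[E_{\T_n}H_n\sim\frac{\gamma^2/(4r)}{n\cdot\gamma/(2\sqrt{\pi n^3})}=\frac{\gamma\sqrt{\pi n}}{2r}=\sqrt{\frac{\pi}{2rG_\zeta''(s)}}\;n^{1/2},\]
which is the assertion.

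I do not anticipate a genuine obstacle: the only points requiring care are the justification of term-by-term differentiation of the singular expansion of $C_\zeta$, which is exactly the content of Theorem~\ref{thm singular differentiation}, and the verification that the error term in the expansion of $\Xi$ is genuinely $O((1-z/r)^{-1/2})$ so that Theorem~\ref{thm transfer} isolates the simple pole at $z=r$; both are routine bookkeeping. The substantive input is the singular expansion of $C_\zeta$ already provided by Proposition~\ref{prop: C-is-implicit}.
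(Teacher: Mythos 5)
Your proposal is correct and follows essentially the same route as the paper: reduce the problem to the exponential generating function of the total leaf depth, obtain a closed form via Lemma~\ref{le additive functions}, and transfer the singular expansion of $C_\zeta'$. Your primary derivation (summing $k\,\Xi_k(z)$ as a geometric-type series) and the parenthetical alternative (the recursive additive functional with toll $\theta(t)=|t|$ adjusted at $\bullet$) are both present in spirit in the paper, which goes directly through the recursive case of the lemma with $\phi(t)=\sum_{v\text{ leaf}}\Ht(v)$. One small point worth noting: you obtain $\Xi(z)=zC_\zeta'(z)\bigl(C_\zeta'(z)-1\bigr)$, whereas the paper writes $\Phi(z)=z\bigl(C_\zeta'(z)\bigr)^2$. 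Your form is the more careful one — the toll for the total-leaf-depth functional must vanish at the single-vertex tree $\bullet$ (since its lone leaf sits at height $0$), which gives $\Theta(z)=zC_\zeta'(z)-z$ and hence the extra $-zC_\zeta'(z)$ term. That term is $O\bigl((1-z/r)^{-1/2}\bigr)$ and therefore does not affect the leading singular behaviour or the stated asymptotic, so both expressions yield the same conclusion.
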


The approach is to first compute the expected sum of the heights of the leaves of a tree, since, if $\phi(t)$ is the sum of the heights of the leaves of $t$, then $E_{\T^{(\ell)}_n}H_n = \frac{1}{n} E_{\T^{(\ell)}_n}\phi $.  Therefore Theorem \ref{thm expected leaf height} is an immediate consequence of the next result.

\begin{thm} Let $\phi(t)$ be the sum of the heights of the leaves of $t$.  Then $E_{\T^{(\ell)}_n}\phi \sim   \sqrt{\frac{\pi}{2rG_\zeta''(s)}} n^{3/2}$.
\end{thm}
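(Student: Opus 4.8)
The plan is to realize $\phi$ as a purely recursive additive functional in the sense of Lemma \ref{le additive functions} and then read off the generating function asymptotics via the transfer theorems. First I would write $\phi(t) = \theta(t) + \sum_{j=1}^{\deg(t)} \phi(\tilde t_j)$ where the toll function $\theta(t)$ counts the contribution to the leaf-height sum coming from the top edge of $t$: each leaf of $t$ sits one level higher in $t$ than it does in the subtree containing it, so $\theta(t) = |t| - \cf\{|t|=1\}$ counts the number of leaves of $t$ that are not the root, i.e. $\theta(t) = |t|$ if $\deg(t)\ge 1$ and $\theta(\bullet)=0$. (One should double-check the single-leaf normalization, since $\phi(\bullet)=0=\theta(\bullet)$ as required.) The associated generating function is then $\Theta(z) = \sum_{t\in\T, \deg(t)\ge 1} |t| w(t) z^{|t|}/|t|! = z C_\zeta'(z) - z$, since differentiating $C_\zeta(z) = \sum_t w(t) z^{|t|}/|t|!$ and multiplying by $z$ brings down the factor $|t|$, and we must subtract the single-leaf term $z$.

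Next I would apply the recursive case of Lemma \ref{le additive functions}, equation \eqref{eq recursive additive}, to get
\[ \Phi(z) = C_\zeta'(z)\,\Theta(z) = C_\zeta'(z)\bigl(z C_\zeta'(z) - z\bigr) = z\bigl(C_\zeta'(z)^2 - C_\zeta'(z)\bigr). \]
Now I need the singular expansion of $C_\zeta'(z)$ near $z=r$. From Proposition \ref{prop: C-is-implicit} and Theorem \ref{thm implicit-function}, $C_\zeta(z) = s - \gamma\sqrt{1-z/r} + O(1-z/r)$ with $\gamma = \sqrt{2r/G_\zeta''(s)}$, and differentiating (Theorem \ref{thm singular differentiation}) gives $C_\zeta'(z) = \frac{\gamma}{2r}(1-z/r)^{-1/2} + O(1)$. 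Squaring, the dominant singular term of $C_\zeta'(z)^2$ is $\frac{\gamma^2}{4r^2}(1-z/r)^{-1}$, which dominates the $(1-z/r)^{-1/2}$ term from $-C_\zeta'(z)$, so
\[ \Phi(z) = \frac{\gamma^2}{4r}(1-z/r)^{-1} + O\bigl((1-z/r)^{-1/2}\bigr)\quad\text{as } z\to r. \]
Applying Theorem \ref{thm transfer} with $\zeta \leadsto r$, the $(1-z/r)^{-1}$ term contributes $\frac{\gamma^2}{4r} r^{-n}$ to $[z^n]\Phi(z)$ with error $O(r^{-n} n^{-1/2})$, so $[z^n]\Phi(z) \sim \frac{\gamma^2}{4r} r^{-n}$. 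Finally, dividing by the normalizing count $[z^n]C_\zeta(z) \sim \frac{\gamma}{2\sqrt{\pi n^3}} r^{-n}$ (Proposition \ref{prop: C-is-implicit}),
\[ E_{\T_n}\phi = \frac{n![z^n]\Phi(z)}{n![z^n]C_\zeta(z)} \sim \frac{\gamma^2/(4r)}{\gamma/(2\sqrt{\pi n^3})} = \frac{\gamma}{2r}\sqrt{\pi n^3} = \frac{1}{2r}\sqrt{\frac{2r}{G_\zeta''(s)}}\sqrt{\pi}\, n^{3/2} = \sqrt{\frac{\pi}{2rG_\zeta''(s)}}\, n^{3/2}, \]
which is the claim.

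The main obstacle I anticipate is not any single deep step but rather getting the toll function $\theta$ and its normalization exactly right, together with the bookkeeping of which singular term dominates: here $\Phi$ has a genuine simple pole at $r$ (coming from $C_\zeta'(z)^2$) rather than the square-root-type singularities that appeared in the leaf-count computations, so one must be careful that Theorem \ref{thm transfer} is being applied to a $\Delta$-analytic function with the stated error term — this requires knowing $C_\zeta$, hence $C_\zeta'$, is $\Delta$-analytic, which follows from Theorem \ref{thm implicit-function} (aperiodicity via $\gcd\{k:\zeta_k\neq 0\}=1$) and Theorem \ref{thm singular differentiation}. A secondary check is that the error terms after dividing by $[z^n]C_\zeta(z)$ genuinely give an $o(n^{3/2})$ remainder, so that the asymptotic equivalence $\sim$ is justified; this is routine since the next-order term in $\Phi$ is $O(r^{-n}n^{-1/2})$, which after normalization is $O(n)$, negligible against $n^{3/2}$.
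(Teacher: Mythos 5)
Your proof is correct and follows essentially the same route as the paper: express $\phi$ as a purely recursive additive functional, apply Lemma \ref{le additive functions} to get a closed form for $\Phi(z)$ in terms of $C_\zeta'(z)$, then use singular differentiation and the transfer theorem. The one place you differ — and where you are actually a bit more careful than the paper — is the toll function: you take $\theta(t)=|t|$ for $\deg(t)\ge 1$ and $\theta(\bullet)=0$, giving $\Theta(z)=zC_\zeta'(z)-z$ and $\Phi(z)=z\bigl(C_\zeta'(z)^2-C_\zeta'(z)\bigr)$, whereas the paper uses $\theta(t)=|t|$ throughout and so gets $\Phi(z)=z\,C_\zeta'(z)^2$. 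Strictly speaking your normalization is the correct one, since the sum of leaf heights in the single-leaf tree is $0$, and the lemma's hypothesis forces $\phi(\bullet)=\theta(\bullet)$; but the discrepancy amounts to the lower-order term $-zC_\zeta'(z)=O((1-z/r)^{-1/2})$, which is dominated by the simple pole from $C_\zeta'(z)^2$, so both versions give the same leading asymptotics $[z^n]\Phi(z)\sim\frac{\gamma^2}{4r}r^{-n}$ and hence the same final answer. Your remarks about $\Delta$-analyticity (from Theorem \ref{thm implicit-function} via aperiodicity, plus Theorem \ref{thm singular differentiation}) and about the error after normalizing by $[z^n]C_\zeta(z)$ being $O(n)$, hence negligible against $n^{3/2}$, are exactly the checks the paper leaves implicit.
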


\begin{proof} Observe that 
\[ \phi(t) = |t| +\sum_{j=1}^{\deg(T)} \phi(\tilde t_j).\]
Let $\Phi(z)$ be the exponential generating function associated with $\phi$.   Applying Lemma \ref{le additive functions}, we have
\[ \Phi(z) = z(C'_\zeta(z))^2.\]
By Theorem \ref{thm singular differentiation} we have
\[ C'_\zeta(z) = \frac{\gamma}{2r}(1-z/r)^{-1/2} + O(1).\]
Consequently,
\[ (C'_\zeta(z))^2 = \frac{\gamma^2}{4r^2}(1-z/r)^{-1} + ((1-z/r)^{-1/2}).\]
Therefore 
\[ E_{\T^{(\ell)}_n} \phi = \frac{n! [z^{n-1}] (C'_\zeta(z))^2}{n![z^n]C_\zeta(z)} \sim \frac{\frac{\gamma^2r^{-n+1}}{4r^2}}{\frac{\gamma}{2\sqrt{\pi n^3}}r^{-n}} = \frac{\gamma \sqrt{\pi}}{2r} n^{3/2} = \sqrt{\frac{\pi}{2rG_\zeta''(s)}} n^{3/2}.\qedhere\]
\end{proof}

\bibliographystyle{amsplain}
\bibliography{../Probability_Bibliography}
\end{document}